\def\vectorfields#1{{\cal X}(#1)}
\def\ov#1{\overline{#1}}
\def\fpd#1#2{\frac{\partial #1}{\partial #2}}
\def\R{{\rm I\kern-.20em R}}
\def\sode{second-order differential equation field}
\newcommand{\lie}[1]{\mathcal{L}_{#1}}
\newcommand{\mybox}[1]{\makebox(0,0){\footnotesize{#1}}}
\def\Sec{\mathop{\mathrm{Sec}}}
\def\la{{\mathfrak g}}
\def\pM{{\pi}^{\scriptscriptstyle M}}
\def\pTM{\pi^{\scriptscriptstyle TM}}
\def\pTTM{\pi^{\scriptscriptstyle TTM}}
\def\otau{{\ov\tau}}
\def\psiM{\psi^{\scriptscriptstyle M}}
\def\psiTM{\psi^{\scriptscriptstyle TM}}
\def\psiTTM{\psi^{\scriptscriptstyle TTM}}
\def\og{{\ov \la}}
\def\Ad{\mathop{\mathrm{ad}}\nolimits}
\def\ad{\Ad}
\def\Ker{\mathop{\mathrm{ker}}}
\def\Im{\mathop{\mathrm{im}}}
\def\id{\mathrm{id}}
\def\vf#1{{\displaystyle\frac{\partial}{\partial #1}}}
\def\clift#1{#1^{\scriptscriptstyle\mathrm{C}}}
\def\vlift#1{#1^{\scriptscriptstyle\mathrm{V}}}
\def\cgamma{\clift{\gamma}}
\def\comega{\clift{\omega}}
\def\vgamma{\vlift{\gamma}}
\def\vomega{\vlift{\omega}}
\def\lequiv{\lbrack\!\lbrack}
\def\requiv{\rbrack\!\rbrack}
\def\equivcl#1{\lequiv#1\requiv}
\def\D{\mathcal{D}}
\def\X{\mathcal{X}}
\def\conn#1#2#3{\setbox1=\hbox{$\scriptstyle{#2}{#3}$}%
\setbox2=\hbox to\wd1{$\hfil\scriptstyle{#1}\hfil$}
\Gamma^{\!\box2}_{\!\box1}}
\def\barconn#1#2#3{\setbox1=\hbox{$\scriptstyle{#2}{#3}$}%
\setbox2=\hbox to\wd1{$\hfil\scriptstyle{#1}\hfil$}
\ov{\Gamma}^{\!\box2}_{\!\box1}}
\def\adjconn#1#2#3{\setbox1=\hbox{$\scriptstyle{#2}{#3}$}%
\setbox2=\hbox to\wd1{$\hfil\scriptstyle{#1}\hfil$}
\Upsilon^{\!\box2}_{\!\box1}}
\def\onehalf{{\textstyle\frac12}}
\def\lie#1{\mathcal{L}_{#1}}
\newtheorem{thm}{\bf Theorem}
\newtheorem{prop}[thm]{\bf Proposition}
\begin{document}

\title{Reduction and reconstruction aspects of
second-order dynamical systems with symmetry}
\author{M.\ Crampin${}^{a}$\footnote{email: crampin@btinternet.com}\, and\, T.\ Mestdag${}^{a,b}$\footnote{email: tom.mestdag@ugent.be}\\
{\small ${}^a$Department of Mathematical Physics and Astronomy }\\
{\small Ghent University, Krijgslaan 281, B-9000 Ghent, Belgium}\\
{\small ${}^b$ Department of Mathematics}\\
{\small University of Michigan, 530 Church Street, Ann Arbor, MI
48109, USA}}
\date{}

\maketitle

{\small {\bf Abstract.} We examine the reduction process of a system
of second-order ordinary differential equations which is invariant
under a Lie group action. With the aid of connection theory, we
explain why the associated vector field decomposes in three parts
and we show how the integral curves of the original system can be
reconstructed from the reduced dynamics. An illustrative example
confirms the results.
\\[2mm]
{\bf
Mathematics Subject Classification (2000).} 34A26, 37J15, 53C05.
\\[2mm]
{\bf Keywords.} second-order dynamical system, symmetry, principal
connection, reduction, reconstruction.}

\section{Introduction}

This paper is concerned with second-order dynamical systems, or in
other words systems of second-order ordinary differential equations,
which admit a Lie group of symmetries; the question it deals with is
how the symmetry group can be used to simplify the system (reduction),
and how, knowing a solution of the simplified system one can find a
solution of the original system (reconstruction).

This is not of course a new problem:\ reduction and reconstruction
have been studied in a number of different contexts in geometry and
dynamics. We mention in particular the following topics in which the
object of interest is a second-order system:
\begin{itemize}
\item the geodesics of a manifold with a Kaluza-Klein metric, and the
Wong equations \cite{Mont};
\item Lagrange-Poincar\'e equations and reduction by stages
\cite{Cendra,Me2};
\item non-Abelian Routh reduction \cite{Marsden};
\item Chaplygin systems \cite{Bloch,Cortes}.
\end{itemize}
We mention these studies in order to emphasise the fact that we are
engaged in this paper in something different from any of them.  Each
of the listed studies deals with a special class of second-order
system --- for example, with systems of Euler-Lagrange equations,
that is, equations derived by variational methods from a Lagrangian.
We, by contrast, deal with systems of second-order equations pure
and simple:\ we make no assumptions about how they are derived, and
make no appeal to properties other than the property of being
second-order and being invariant under a suitable symmetry group.
(For the sake of clarity, we should perhaps remind the reader that
by no means all second-order systems are of Euler-Lagrange type.)
This has two important consequences.  First, our results are more
general than any of those obtained above:\ indeed, in some sense
they must subsume the main features of the results of any of these
particular studies.  We shall show in fact how a system with
symmetries may be reduced to a coupled pair of sets of equations,
one of second order and one of first order. Second, our methods must
be likewise more general:\ one cannot use variational methods, for
example, if one is not dealing with a system of Euler-Lagrange
equations.  Of course one cannot say much without invoking some type
of auxiliary machinery; but we use the minimum possible, just a
connection.

So far as we are aware, the only previous attempt at anything like
this bare hands approach is \cite{Bullo}, which deals with control
systems of mechanical type, but with only a one-dimensional symmetry
group; we deal with non-Abelian groups of arbitrary dimension.

When we come to discuss the second-order case we shall represent a
system of second-order ordinary differential equations by a special
kind of vector field.  There is therefore a lot to be said for
starting off by considering the question for arbitrary vector fields,
or in other words for first-order dynamical systems.

Before proceeding, we must again make it clear exactly what we are
attempting to do here.  Perhaps the most studied example of an
invariant first-order system is that of a Hamiltonian system with a
symmetry group.  The symmetry of such a system provides an
equivariant moment map that is invariant under the Hamiltonian
dynamics, and this feature plays an important role in the reduction
process (this is discussed for example in \cite{MW}; a Lagrangian
version is given in \cite{MMR}, together with an account of the
reconstruction of the solutions in both the Lagrangian and
Hamiltonian context).  By contrast, we deal with vector fields pure
and simple; since we have no Hamiltonian structure at our disposal,
we cannot appeal to properties of momentum maps and so on.

Suppose given a dynamical system, represented by a vector field $X$
on a manifold $M$, which admits a Lie group $G$ of symmetries.
Suppose further that $G$ acts freely and properly on $M$ so that $M$
is a principal bundle over a manifold $B$ with group $G$; let
$\pM:M\to B$ be the projection. Since $X$ is invariant under the
action of $G$ there is a vector field $\ov{X}$ on $B$ which is
$\pM$-related to $X$; this is the reduced dynamical system.

As a process of reduction, however, this is clearly incomplete in the
sense that there is no way of reconstructing the original dynamical
system from the reduced one; any two $G$-invariant dynamical systems
on $M$ which differ by a $\pi^M$-vertical vector field (which is
necessarily also $G$-invariant) have the same reduced dynamics. To
see what is at stake, let us introduce coordinates $(x^i,x^a)$ on $M$
such that the orbits of $G$, or in other words the fibres of $M\to
B$, are given by $x^i=\mbox{constant}$; the $x^i$ may therefore be regarded as
coordinates on $B$. Let us denote by $\tilde{E}_a$ a basis for the
fundamental vector fields on $M$ generated by the $G$-action; then
\[
\tilde{E}_a=K_a^b\vf{x^b}
\]
for some non-singular matrix-valued function $(K^b_a)$. Suppose
further that we have at our disposal a distribution on
$M$ which is transverse to the fibres and $G$-invariant. Such a
distribution will be spanned by vector fields $X_i$ of the form
\[
X_i=\vf{x^i}-\Lambda_i^a\tilde{E}_a
\]
for certain functions $\Lambda_i^a$. We may then write
\[
X=Y^iX_i+Z^a\tilde{E}_a=Y^i\vf{x^i}+(Z^a-\Lambda^a_iY^i)\tilde{E}_a.
\]
The necessary and sufficient conditions for $X$ to be $G$-invariant
are that $Y^iX_i$ and $Z^a\tilde{E}_a$ are separately $G$-invariant.
In particular, the $Y^i$ are independent of the $x^a$, so that
$Y^i\partial/\partial x^i$ may be regarded as a vector field on the
base manifold $B$:\ this is the reduced dynamical system $\ov{X}$,
of course. The integral curves of $X$ are solutions of the
differential equations
\[
\dot{x}^i=Y^i,\qquad \dot{x}^a=(Z^b-\Lambda^b_i\dot{x}^i)K^a_b.
\]
The equations of the first set define the integral curves of the
reduced dynamical system.  The remainder can in principle be used to
reconstruct an integral curve of the original dynamical system from a
known integral curve of the reduced one.

This description of the process is somewhat disingenuous:\ a
fibre-transverse $G$-invariant distribution on a principal
$G$-bundle is of course just a connection, or more accurately a
principal connection.  This observation gives us the opportunity to
describe the reduction in a coordinate-independent way, as is
clearly desirable; when we do so, moreover, the reconstruction step
acquires a more transparent geometrical interpretation than is
apparent from the description above.  Our basic contention is that
the simplest additional machinery that is required to give a
geometrically coherent account of the reduction and reconstruction
of dynamical systems with symmetry is a connection; and we aim to
show how these processes work in that context for second-order
dynamical systems.

A second-order dynamical system can be represented by a vector field
$\Gamma$ on the tangent bundle $TM$ of a differentiable manifold $M$,
of the form
\[
\Gamma=v^\alpha\vf{x^\alpha}+\Gamma^\alpha\vf{v^\alpha},
\]
where the $v^\alpha$ are the fibre coordinates. Given a vector field
$X$ on $M$, let us denote by $\clift{X}$ its complete, or tangent,
lift to $TM$ and by $\vlift{X}$ its vertical lift. Then in terms of
the structure described above in the first-order case, we may express
a second-order differential equation field $\Gamma$ on $TP$ in the form
\[
\Gamma=v^i\clift{X}_i+v^a\clift{\tilde{E}}_a+D^i\vlift{X}_i
+D^a\vlift{\tilde{E}}_a.
\]
It turns out that in order for $\Gamma$ to be invariant under the
action of $G$ on $TM$ induced from its action on $M$, each of the
three components $v^a\clift{\tilde{E}}_a$, $D^a\vlift{\tilde{E}}_a$
and $v^i\clift{X}_i+D^i\vlift{X}_i$ must be invariant. The last of
these represents a second-order dynamical system, albeit in a
generalized sense; the last two define the coupled first- and
second-order equations which constitute the reduced system mentioned
earlier. It is our aim to explain how this decomposition arises,
with the help of connection theory; and to discuss the processes of
reduction and reconstruction from this standpoint.

In the following section we discuss the first-order case in greater
detail.  In Section 3 we deal with the connection theory required for
the analysis of the reduction and reconstruction of second-order
systems, and in Section 4 we carry out that analysis. Section 5 is
devoted to consideration of an example.

It will become apparent that a particular kind of Lie algebroid, the
Atiyah algebroid of a principal bundle, plays an important role in
the theory.  In fact one can locate the case discussed here in a
more general framework consisting of Lie algebroids and anchored
vector bundles.  Investigation of this aspect of the matter
continues.

\section{First-order systems}

As before, we suppose that $M$ is a manifold on which a Lie group $G$
acts freely and properly to the right; we denote the action by $\psiM
: G \times M \to M$.  Then $\pM: M \to M/G=B$ is a principal fibre
bundle and $\pM\circ \psiM_g =\pM$ for all $g\in G$.  For $\xi\in\la$,
the Lie algebra of $G$, we denote by $\xi_M$ the fundamental vector
field corresponding to $\xi\in\la$, that is, the infinitesimal
generator of the 1-parameter group $\psi^M_{\exp(t\xi)}$ of
transformations of $M$.

The $G$-action on $M$ can be extended to a $G$-action $\psiTM: G
\times TM \to TM$ on the tangent manifold $\tau:TM\to M$, given by
$(g,v_m) \mapsto T_m \psiM_g (v_m)$, for $m\in M$, $v_m\in T_mM$.
This action equips $TM$ with the structure of a principal fibre bundle
over $TM/G$, with projection $\pTM$.  Then, of course, $\pTM\circ
\psiTM_g=\pTM$ and $T\pM\circ \psiTM_g=T\pM$, where $T\pM: TM \to
T(M/G)$; on the other hand, $\tau\circ\psiTM_g=\psiM_g$.  We also have
an action $\psi^{\mathcal{X}}$ on the space of sections of $TM\to M$,
that is, on $\vectorfields{M}$, the space of vector fields on $M$,
given by
\[
\psi^{\mathcal{X}}(X)(m)=\psiTM_g(X(\psiM_{g^{-1}}(m))).
\]
A vector field $X$ on $M$ is $G$-invariant if for all $g\in G$
\[
X(\psiM_g(m))=\psiTM_g(X(m)), \quad\mbox{or equivalently}\quad
\psi^{\mathcal{X}}_g(X)=X.\]
If $X$ is $G$-invariant then $[\xi_M,X]=0$ for all $\xi\in\la$. If
$G$ is connected, as we shall generally assume to be the case, this
is a sufficient as well as a necessary condition for invariance.

The fundamental vector fields satisfy
$\psi^{\mathcal{X}}_g(\xi_M)=(\ad_{g^{-1}}\xi)_M$, where $\Ad$ is the
adjoint action of $G$ on $\la$.

For all $m\in M$, $\pTM_m$ induces an isomorphism $T_mM \to
(TM/G)_{\pM(m)}$, the fibre of $TM/G$ over $\pM(m)\in M/G$, and thus
also an isomorphism $TM \to (\pM)^*TM/G$.  As a consequence of this
property there is a 1-1 correspondence between invariant vector
fields on $M$ and sections of the vector bundle $\otau: TM/G \to
M/G$ (see e.g.\ \cite{Me}).  The vector bundle $\otau$ has the
structure of a Lie algebroid:\ the anchor map $\varrho: TM/G \to
T(M/G)$ is given by $\equivcl{v} \mapsto T\pM(v)$ (here and below
$\equivcl{\cdot}$ represents the equivalence class of the argument
under $G$-equivalence, or in other words its $G$-orbit), which is
independent of the choice of $v\in\equivcl{v}$ because of the
property $T\pM \circ \psiTM_g =T\pM$; the bracket of two sections of
$TM/G$ is given by the bracket of the associated invariant vector
fields.  With this Lie algebroid structure $\otau$ is called the
Atiyah algebroid of the principal $G$-bundle $\pi^M$ \cite{Mac}.

The fibre-linear map $T\pM:TM\to T(M/G)$ is surjective on the fibres.
The kernel of the induced map $TM\to(\pM)^*T(M/G)$ is isomorphic to
the bundle $M\times \la \to M$; the identification of $M\times \la$ as
a subbundle of $TM$ is given by $(m,\xi) \mapsto \xi_M (m)$.

A connection on $\pM$ is a right splitting $\gamma$ of the short
exact sequence
\begin{equation} \label{short1}
0 \to M\times \la \to TM \stackrel{T\pM}{\to} (\pM)^*T(M/G) \to 0
\end{equation}
of vector bundles over $M$.  The corresponding left splitting $TM \to
M\times\la$ will be denoted by $\omega$.  We will write $\varpi$ for
its projection on $\la$.  The distinction between $\omega$ and
$\varpi$ can be made clear as follows.  If we identify $M\times \la$
with a subbundle of $TM$ then $\omega$ may be thought of as a type
$(1,1)$ tensor field on $M$; we have $\omega(\xi_M) = \xi_M$,
while $\varpi(\xi_M)=\xi$.  Needless to say, both $\omega$ and
$\varpi$ vanish on $\Im(\gamma)$.  The map $\gamma$ may be thought of
as the horizontal lift, the $\la$-valued 1-form $\varpi$ as the
connection form.

If $\varpi$ satisfies $\varpi(\psiTM_g v)= \Ad_{g^{-1}}\varpi(v)$ the
connection is said to be principal.  Equivalently, principal
connections are right splittings $\gamma$ with the property
$\gamma(\psiM_g m, \ov v) = \psiTM_g\gamma(m,\ov v)$ for all
$\ov{v}\in T_{\pM(m)}(M/G)$.  The condition for the connection to be
principal when expressed in terms of $\omega$ is simply that it is
invariant under the $G$-action on $TM$, that is, that
$\omega\circ\psiTM_g=\psiTM_g\circ\omega$.

The manifold $M\times\la$ comes equipped with the right action
$g\mapsto(\psiM_g,\Ad_{g^{-1}})$; we denote by $\og = (M\times \la)/G$
its quotient under this action.  We remark that $\og$ is the (total
space of) the vector bundle associated with the principal $G$-bundle
$\pM$ by the adjoint action of $G$ on $\la$; it is often called the
adjoint bundle.  When we take the quotient of the exact sequence
(\ref{short1}) under the action of $G$ we obtain the following short
exact sequence of vector bundles over $M/G$:
\begin{equation}\label{short2}
0 \to \og \to TM/G \stackrel{\varrho}{\to} T(M/G) \to 0,
\end{equation}
which is called the Atiyah sequence \cite{Mac}.  If $\gamma$ is a
principal connection on $\pM$ then $\pTM(\gamma(m,\ov v))$ is
independent of the choice of $m\in\equivcl{m}=\otau(\ov v)$ because of
the invariance of $\gamma$; if we set $\ov \gamma(\ov v) =
\pTM(\gamma(m,\ov v))$ then $\ov{\gamma}:T(M/G)\to TM/G$ is
well-defined and satisfies $\varrho\circ\ov\gamma=\id$, and is
therefore a right splitting of the Atiyah sequence.  This establishes
a correspondence between principal connections on $\pM$ and splittings
of the Atiyah sequence, which is actually 1-1.  If $\ov\gamma$ is a
right splitting of the Atiyah sequence, the corresponding left
splitting will be denoted by $\ov\omega$.

If $\varpi$ is the connection form of a principal connection on
$\pM$ and $X$ a $G$-invariant vector field on $M$ then $\varpi(X)$
is a $\la$-valued function on $M$ which satisfies
$\varpi(X)\circ\psiM_g=\ad_{g^{-1}}\varpi(X)$. So the map
$m\mapsto(m,\varpi(X)(m))\in M\times\la$ is constant on the orbits
of the $G$-action, and therefore defines a section of $\ov{\la}\to
M/G$.

We next describe the reduction of a $G$-invariant vector field.  As we
pointed out earlier, a $G$-invariant vector field $X$ can be
identified with a section $\tilde X$ of $TM/G$, given by $\tilde X
(\pM(m))= \pTM(X(m))$.  If $\gamma$ is a principal connection then
$\tilde X$ can in turn be decomposed into a vector field $\ov X =
\varrho \tilde X$ on $M/G$ and a section $\ov\omega(\tilde X)$ of
$\og$.  The relation between $X$ and $\ov X$ is $\ov
X(\pM(m))=T\pM(X(m))$; $\ov{X}$ is the reduced vector field of $X$.

The decomposition may be described in a slightly different way.  Given
a connection $\gamma$, any $X\in\vectorfields M$ can be decomposed
into its horizontal and vertical components with respect to $\gamma$;
the horizontal component is determined by a section of
$(\pM)^*T(M/G)$, and the vertical component can be identified with
$\omega(X)$ and hence with the $\la$-valued function $\varpi(X)$.
When $X$ is $G$-invariant and $\gamma$ is a principal connection, the
horizontal component is the horizontal lift of a vector field on
$M/G$, namely $\ov{X}$; and the section of $\og$ that $\varpi(X)$
defines is just $\ov\omega(\tilde X)$.

The following fact about the integral curves of an invariant vector
field is well-known. Suppose
that $t\mapsto c(t)$ is an integral curve of $X$, so that $\dot c =
X\circ c$. Then the curve $t\mapsto \ov c(t) = \pM(c(t))$ in $M/G$
is an integral curve of $\ov X$, that is,
\begin{equation}\label{intcurveovX}
\dot{\ov c} = \ov X\circ\ov{c}.
\end{equation}
Indeed, $\dot{\ov c} = T\pM\circ\dot c = T\pM \circ(X\circ c)=
\ov{X}\circ(\pM\circ c)$.
In fact an integral curve $c$ of $X$ is completely determined by the
underlying integral curve $\ov c$ of $\ov X$ and a curve $t\mapsto g(t)$
in $G$. To see this, note that there is a unique curve ${\ov c}^\gamma$ in
$M$, the horizontal lift of $\ov{c}$ through $c(0)$, such that
\begin{itemize}
\item ${\ov c}^\gamma$ projects onto $\ov c$
\item ${\ov c}^\gamma(0)=c(0)$
\item the tangent $\dot{{\ov c}}^\gamma$ to ${\ov c}^\gamma$ is
everywhere horizontal (so that ${\ov c}^\gamma$ satisfies
$\dot{{\ov c}}^\gamma=\gamma({\ov c}^\gamma,\dot{\ov c})$).
\end{itemize}
Then since $c$ also projects onto $\ov{c}$ there is a curve $t\mapsto
g(t)\in G$, with $g(0)=e$ (the identity element of $G$), such that
$c(t)=\psiM_{g(t)}{\ov c}^\gamma(t)$.  Now let $\theta$ be the
Maurer-Cartan form of $G$:\ then $t\mapsto \theta(\dot{g}(t))$ is a
curve in $\la$.  By differentiating the equation $c=\psiM_{g}{\ov
c}^\gamma$ we see that $\dot{g}$ must satisfy
\begin{equation}\label{intcurveX}
\dot c = \psiTM_{g} \left((\theta(\dot{g}))_M\circ{\ov c}^\gamma
+ \dot{{\ov c}}^\gamma\right).
\end{equation}
But $\dot c=X\circ c=X\circ (\psiM_{g}{\ov c}^\gamma)=
\psiTM_{g}(X\circ \ov{c}^\gamma)$, whence
\[
(\theta(\dot{g}))_M\circ{\ov c}^\gamma + \dot{{\ov c}}^\gamma=
X\circ{\ov c}^\gamma.
\]
The first term on the left-hand side (which when evaluated at $t$ is
the value at ${\ov c}^\gamma(t)$ of the fundamental vector field
corresponding to $\theta(\dot{g}(t))\in\la$) is vertical, the second
horizontal, so this equation is simply the decomposition of $X$ into
its horizontal and vertical components, at any point of $\ov{c}^\gamma$.
In particular,
\begin{equation} \label{geq}
\theta(\dot{g}) = \varpi(X\circ{\ov c}^\gamma),
\end{equation}
the right-hand side being of course a curve in $\la$.  This is a
differential equation for the curve $g$, and has a unique solution
with specified initial value.  (When $G$ is a matrix group
$\theta(\dot{g})=\dot{g}g^{-1}$; and the equation
$\theta(\dot{g})=\xi$, where $t\mapsto\xi(t)$ is a curve in $\la$,
can be written $\dot{g}=\xi g$, from which the assertion is obvious.
See for example \cite{Sharpe} for the general case.)  Thus the curve
$g$ is uniquely determined by equation~(\ref{geq}) and the initial
condition $g(0)=e$.

We can conclude the following.
\begin{prop}
Given a principal connection $\gamma$, one can reconstruct the
integral curves of the $G$-invariant vector field $X$ from those of
the reduced vector field $\ov{X}$. In order to carry out the
reconstruction one needs to solve successively
\[
\left\{ \begin{array}{lllll} \dot{\ov c} & = & \ov X (\ov c)&&
\mbox{for $\ov c$} \\
\dot{{\ov c}}^\gamma &= & \gamma({\ov c}^\gamma,\dot{\ov c})
&& \mbox{for ${\ov c}^\gamma$} \\
\theta(\dot{g}) &=& \varpi(X\circ{\ov c}^\gamma) && \mbox{for $g$},
\end{array}
\right.
\]
to obtain finally the integral curve $c=\psiM_g{\ov c}^\gamma$ of
$X$.\end{prop}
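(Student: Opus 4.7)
The plan is to assemble into a single reconstruction procedure the three equations listed in the proposition, each of which has essentially been derived in the discussion preceding the statement; what remains is to check that the procedure recovers every integral curve of $X$ from the data of an initial point $c(0)\in M$ and the corresponding integral curve of $\ov X$.

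The first equation has already been established in (\ref{intcurveovX}): for any integral curve $c$ of $X$, the projection $\ov c = \pM\circ c$ satisfies $\dot{\ov c}=\ov X\circ \ov c$. Conversely, starting from such a curve $\ov c$ and an initial point $m_0\in M$ with $\pM(m_0) = \ov c(0)$, the second equation is simply the defining ODE for the horizontal lift ${\ov c}^\gamma$ of $\ov c$ through $m_0$; its existence and uniqueness are standard consequences of the fact that $\gamma$ is a principal connection on $\pM$.

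For the third step, I would observe that since both $c$ and ${\ov c}^\gamma$ project onto $\ov c$ and agree at $t=0$, freeness of the $G$-action furnishes a unique curve $g\colon I \to G$ with $g(0)=e$ and $c = \psiM_{g} {\ov c}^\gamma$. Differentiating this identity produces (\ref{intcurveX}); comparing it with the $G$-equivariant expression $\dot c = X\circ c = \psiTM_{g}(X\circ {\ov c}^\gamma)$ yields the pointwise decomposition
\[
(\theta(\dot{g}))_M\circ {\ov c}^\gamma + \dot{{\ov c}}^\gamma = X\circ {\ov c}^\gamma.
\]
Applying the connection form $\varpi$ and using that $\dot{{\ov c}}^\gamma$ is horizontal while $\varpi((\theta(\dot{g}))_M)=\theta(\dot{g})$ gives exactly the third equation $\theta(\dot{g}) = \varpi(X\circ {\ov c}^\gamma)$.

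The main technical point is the well-posedness of this final ODE for $g$. Given the prescribed curve $\xi(t):=\varpi(X\circ {\ov c}^\gamma)(t)$ in $\la$, one must show that $\theta(\dot{g}) = \xi$ admits a unique solution in $G$ with $g(0)=e$. For a matrix group this reduces to the linear ODE $\dot{g}=\xi g$; the general case is the standard Lie-theoretic statement on integration of time-dependent left-invariant vector fields on $G$ cited in the text. Granted this, reversing the decomposition above shows that the curve $c:=\psiM_{g}{\ov c}^\gamma$ satisfies $\dot c = X\circ c$, and agreement at $t=0$ together with uniqueness for $X$ identifies it with the integral curve of $X$ through $c(0)$, completing the reconstruction.
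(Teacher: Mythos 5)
Your argument is correct and follows essentially the same route as the paper: project to get $\ov c$, horizontally lift through $c(0)$, use freeness to obtain the curve $g$ in $G$, and derive $\theta(\dot g)=\varpi(X\circ{\ov c}^\gamma)$ by differentiating $c=\psiM_g{\ov c}^\gamma$ and comparing with the equivariance of $X$. Your explicit closing check that the reconstructed curve is indeed an integral curve of $X$ is a sensible addition that the paper leaves implicit.
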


We now give some explicit expressions for the decomposition.  We
consider first the construction of a basis of vertical vector
fields, that is, vector fields tangent to the orbits of the
$G$-action.  There are in fact two possible choices, at least
locally, corresponding to what are sometimes called, as in
\cite{Bloch}, the `moving basis' and the `body-fixed basis'. The
reference is to rigid body dynamics; the point is that the
body-fixed basis is invariant.

Let $\{E_a\}$ be a basis for $\la$, and $C^c_{ab}$ the corresponding
structure constants.  The moving basis consists of the fundamental
vector fields $(E_a)_M$.  These vector fields are not of course
invariant:\ in fact for any fundamental vector field $\xi_M$,
$\psi^{\mathcal{X}}_g(\xi_M)=(\Ad_{g^{-1}}\xi)_M$, as we pointed out
earlier.  We will usually write
$\tilde{E}_a$ instead of $(E_a)_M$ for convenience.

The definition of the body-fixed basis depends on a choice of local
trivialization of $\pM: M \to M/G$.  Let $U\subset M/G$ be an open
set over which $M$ is locally trivial.  The projection $\pM$ is
locally given by projection onto the first factor in $U\times G \to
U$, and the action by $\psiM_g(x,h)=(x,hg)$. The maps
\[
{\ov E}_a:U\to (M\times\la)/G|_U\quad
\mbox{by}\quad x \mapsto \equivcl{(x,e),E_a}
\]
will give a local basis for $\Sec(\og)=\Sec((M\times\la)/G)$ over $U$.
These maps can be considered as sections of $TM/G\to M/G$ by means of
the identification
\[
{\ov E}_a \in \Sec(\og) \quad \Longleftrightarrow \quad {\ov E}_a:
x\mapsto \pM\big( \tilde{E}_a (x,e) \big) \in \Sec(TM/G).
\]
Recall first that the injection $M\times \la \to TM$ is given by
$(m,\xi) \mapsto \xi_M (m)$. Further, it is clear that the two
elements $((x,e),\xi)$ and $((x,g), \Ad_{g^{-1}}\xi)$ of $M\times\la$ belong
to the same equivalence class in $\og$. This is in perfect agreement
with the above identification, since
\[
\pTM \big( \xi_M (x,e) \big) = \pTM \big( (\Ad_{g^{-1}}\xi)_M (x,g) \big).
\]
Now sections of $TM/G$ can be lifted to invariant vector fields on $M$.
For the above sections, the invariant vector fields are
\[
{\hat E}_a: (x,g) \mapsto (\Ad_{g^{-1}} E_a)_M(x,g)
= \psiTM_g \big((E_a)_M (x,e)\big).
\]
Then, indeed, $\psi^{\mathcal{X}}_g({\hat E}_a)={\hat E}_a$.  The
corresponding basis for $\Sec(M\times\la)$ is given by the
sections $(x,g) \mapsto ((x,g), \Ad_{g^{-1}} E_a)$.  By contrast the
fundamental vector fields ${\tilde E}_a$, identified as sections of
$M\times\la\to M$, are given by $(x,g)\mapsto ((x,g), E_a)$.  The
relation between the two sets of vector fields can be expressed as
${\hat E}_a(x,g)= A_a^b(g){\tilde E}_b(x,g)$ where $(A_a^b(g))$ is the
matrix representing $\Ad_{g^{-1}}$ with respect to the basis $\{E_a\}$ of
$\la$.

In fact the body-fixed basis $\{\hat{E}_a\}$ associated with a local
trivialization $(\pM)^{-1}U\equiv U\times G$ is obtained just by
transferring to $(\pM)^{-1}U$ the right-invariant vector fields on $G$
associated with the basis $\{E_a\}$ of $\la$.  The moving basis, on
the other hand, corresponds to the left-invariant vector fields on $G$
associated with the basis $\{E_a\}$.

Let us take coordinates $(x^i,x^a)$ on $M$ such that $(x^i)$ are
coordinates on $U$, $(x^a)$ coordinates on the fibre.  Then
there are `action functions' (so-called in \cite{Bloch}) such
that ${\tilde E}_a=K_a^b(x^c)\partial/\partial x^b$.
The relation $[{\tilde E}_a, {\tilde E}_b] = C^c_{ab} {\tilde E}_c$
leads to the property
\[
K^c_a \fpd{K^d_b}{x^c} - K^c_b \fpd{K^d_a}{x^c} = C^e_{ab}K^d_e.
\]
The invariance of the vector fields ${\hat E}_a$ can be expressed as
\[
[{\tilde E}_b, {\hat E}_a] = 0 \quad \Longleftrightarrow
\quad {\tilde E}_b(A^c_a)+A^d_aC^c_{bd}=0.
\]
We can use these differential equations as another way of constructing
a body-fixed basis, as follows.  We seek local vector fields
$\{\hat{E}_a\}$, given in terms of the moving basis $\{\tilde{E}_a\}$
by $\hat{E}_b=A_b^a\tilde{E}_a$ where $(A_b^a)$ is a locally defined
non-singular matrix-valued function on $M$, which are $G$-invariant,
which is to say that $[\tilde{E}_a,\hat{E}_b]=0$ for all $a$ and $b$.
Thus, as above, the $A^a_b$ must satisfy
\begin{equation}\label{eqnforA}
\tilde{E}_a(A_b^c)+C^c_{ad}A_b^d=0.
\end{equation}
This is a system of linear partial differential equations for the unknowns
$A_b^a$. The integrability conditions
\[
[\tilde{E}_a,\tilde{E}_b](A_c^d)+C^d_{be}\tilde{E}_a(A_c^e)
-C^d_{ae}\tilde{E}_b(A_c^e)=0
\]
are identically satisfied by virtue of the Jacobi identity.  The
equations therefore have solutions locally on $M$, and a solution can
be specified by choosing a local cross-section of the $G$ action and
specifying the value of $(A_b^a)$ on it; the natural choice, which we
make, is to take it to be the identity matrix. The $A^a_b$ will then
be independent of the $x^i$.

A simple calculation shows that
\[
[\hat{E}_a,\hat{E}_b]=-A^d_aA^e_b\bar{A}^c_fC^f_{de}\hat{E}_c,
\]
where the $\bar{A}^a_b$ are the components of the matrix inverse to
$(A^a_b)$.  On the other hand, if we write
$[\hat{E}_a,\hat{E}_b]=-\hat{C}^c_{ab}\hat{E}_c$ then the coefficients
$\hat{C}^c_{ab}$ must be $G$-invariant, since everything else in the
equation is.  It follows that the value of $\hat{C}^c_{ab}$ along any
fibre of $(\pM)^{-1}U\to U$ is the same as its value on the section
which determines the local trivialization, that is, where $g=e$; if we
take $(A^a_b)$ to be the identity there we obtain
$\hat{C}^c_{ab}=C^c_{ab}$, that is,
$[\hat{E}_a,\hat{E}_b]=-C^c_{ab}\hat{E}_c$ (as one would expect).

We now consider the horizontal vector fields. We have at our disposal
the local coordinate basis $\{\partial/\partial x^i\}$ of
$\vectorfields{T(M/G)}$; we put
\[
{\ov X}_i=\ov\gamma\left(\fpd{}{x^i}\right)\in\Sec(TM/G).
\]
The sections $\{{\ov X}_i,{\ov E}_a\}$ form
a basis of $\Sec(TM/G)$. They can be lifted to a basis $\{{X}_i,
{\hat E}_a\}$ of $\vectorfields{M}$, consisting only of invariant
sections. Then
\[
{X}_i (x,g) = \gamma \left((x,g),\fpd{}{x^i}\biggr|_{x}\right).
\]
If we set
\[
{X}_i =  \fpd{}{x^i} - \gamma_i^b(x^i,x^a) {\hat E}_b=
\fpd{}{x^i} - \gamma_i^b(x^i,x^a) A_b^c(x^a) {\tilde E}_c,
\]
then invariance of $X_i$ amounts to
\[
[{\tilde E}_b, {X}_i] = 0 \quad \Longleftrightarrow  \quad {\tilde E}_b
(\gamma_i^c) =0 \quad \Longleftrightarrow  \quad \fpd{\gamma_i^c}{x^b}
=0.
\]
For future use we calculate $[\hat{E}_a,X_i]$ here also. We have
\[
[\hat{E}_a,X_i]=-X_i(A_a^c)\bar{A}_c^b\hat{E}_b.
\]
Now $\partial A_a^b/\partial x^i=0$, and so
\begin{equation}\label{XofA}
X_i(A_a^c)=-\gamma_i^dA_d^e\tilde{E}_e(A_a^c)=\gamma_i^dC^c_{ef}A_d^eA^f_a
=\gamma_i^dC^e_{da}A^c_e,
\end{equation}
so that
\begin{equation}\label{hatEXbrac}
[\hat{E}_a,X_i]=\gamma_i^bC^c_{ab}\hat{E}_c,
\end{equation}
assuming as we may that $[\hat{E}_a,\hat{E}_b]=-C^c_{ab}\hat{E}_c$.

A vector field $X$ on $M$ can be written as
$X= Y^j {X}_j + Y^b {\hat E}_b$. If $X$ is invariant then
\[
\fpd{Y^j}{x^c} =0 \quad \mbox{and} \quad \fpd{Y^b}{x^c} =0.
\]
If alternatively we set $X= Y^j {X}_j + Z^c{\tilde E}_c$, where $Z^c
=  A^c_bY^b$, then the second invariance condition becomes
\begin{equation}\label{Zinv}
{\tilde E}_d(Z^c)+C_{de}^c Z^e=0.
\end{equation}
An invariant vector field $X$ projects onto the section ${\tilde
X}:(x^i) \mapsto Y^j(x^i)X_j + Y^a(x^i){\ov E}_a$ of $TM/G$ and the
vector field $\ov{X}:(x^i) \mapsto Y^j(x^i)\partial/\partial x^i$ on
$M/G$.  Finally, we have
\begin{itemize}
\item $\omega(X)=Y^a{\hat E}_a=Z^a\tilde{E}_a \in TM$;
\item ${\ov\omega}(\tilde X)=Y^a{\ov E}_a \in \Sec(\og)$;
\item $\varpi(X)=Z^aE_a \in C^\infty(M,\la)$.
\end{itemize}
In fact, as we pointed out before, when $X$ is invariant $\varpi(X)$
defines a section of $\ov{\la}\to M/G$.  We now wish to explain how
one can recognise a section of $\ov{\la}$ in terms of coordinates.
Recall that a section of $\ov{\la}$ can be thought of as a function
$M\to\la$ which is constant on the equivalence classes of the
equivalence relation defining the associated bundle structure; that
is, a $\la$-valued function $s$ on $M$ such that
$s\circ\psiM_g=\ad_{g^{-1}}s$.  Assuming as always that $G$ is
connected, we may equivalently write this condition as
$\xi_{M}(s)+[\xi,s]=0$ for any $\xi\in\la$, where the bracket is the
Lie algebra bracket of $\la$.  We may express $s$ as $s=s^aE_a$ with
respect to a basis $\{E_a\}$ of $\la$; in terms of the components
$s^a$ of $s$ the condition for $s$ to define a section is
\begin{equation}\label{secassoc}
\tilde{E}_b(s^a)+C^a_{bc}s^c=0.
\end{equation}
This makes clear the significance of equation~(\ref{Zinv}).

\section{Second-order diagrams and connections}

In this section we discuss the connection theory relevant to
second-order dynamical systems. Before we do so, however, it will be
convenient to make some remarks about splittings of short exact
sequences in general; these remarks will be useful later.

If $ 0\to \Ker f \to A \stackrel{f}{\to} B\to 0 $ and $ 0\to \Ker g
\to B \stackrel{g}{\to} C\to 0$ are two short exact sequences of
vector bundles over the same manifold, then the sequence
\[
0\to \Ker (g\circ f) \to A \stackrel{g\circ f}{\to} C \to 0
\]
is also exact. Moreover the restriction of $f$ to $\Ker(g\circ f)$
gives rise to a fourth short exact sequence
\[
0\to \Ker f \to \Ker(g\circ f) \stackrel{f}{\to} \Ker g \to 0.
\]
In summary, we can draw the following commutative diagram:

\setlength{\unitlength}{1cm}
\begin{picture}(14.29,6.5)(-3.5,4)
\put(5.1,9.9){\vector(1,0){2.0}} \put(1.14,7.3){\vector(1,0){2.0}}
\put(5.1,7.3){\vector(1,0){2.0}} \put(1.14,4.5){\vector(1,0){2.0}}
\put(5.1,4.5){\vector(1,0){2.0}} \put(1.14,9.9){\vector(1,0){2.0}}

\put(8.13,9.5){\vector(0,-1){1.7}}
\put(4.2,6.8){\vector(0,-1){1.7}}
\put(4.2,9.5){\vector(0,-1){1.7}}
\put(0.6,6.8){\vector(0,-1){1.7}}
\put(0.6,9.5){\vector(0,-1){1.7}}
\put(8.13,6.8){\vector(0,-1){1.7}}

\put(0.6,9.9){\mybox{\fbox{$\Ker f$}}}
\put(4.2,9.9){\mybox{$\Ker(g\circ f)$}}
\put(8.13,9.9){\mybox{\fbox{$\Ker g$}}}

\put(0.6,7.3){\mybox{$\Ker f$}} \put(4.2,7.3){\mybox{\fbox{$A$}}}
\put(8.13,7.3){\mybox{$B$}}

\put(0.6,4.5){\mybox{$\fbox{0}$}} \put(4.2,4.5){\mybox{$C$}}
\put(8.13,4.5){\mybox{\fbox{$C$}}}

\end{picture}

The following facts are immediate. Suppose given splittings
$\gamma_1:B\to A$ and $\gamma_2:C\to A$. If we set
$\gamma_3=f\circ\gamma_2: C \to B$ then $\gamma_3$ is also a
splitting.  If, in addition, $\gamma_2(C)\subset\gamma_1(B)$ then
$\gamma_2=\gamma_1\circ\gamma_3$. Furthermore, $\gamma_1$ restricts
to a splitting $\Ker g \to \Ker(g\circ f)$.  Therefore, given
$\gamma_1$ and $\gamma_2$, each element of $A$ can be uniquely
decomposed into three parts, one in $C$, one in $\Ker f$ and one in
$\Ker g$.  We will use this observation for our decomposition of
\sode s.

We can now turn to the principal matter in hand. The actions of $G$
on $M$ and $TM$ induce also a $G$-action on $TTM$: $\psiTTM: G \times
TTM \to TTM, (g,X_v) \mapsto T_v \psiTM_g(X_v)$.  As before, this
means that there exists a principal fibre bundle structure $\pTTM: TTM
\to TTM/G$ with the properties $\pTTM\circ \psiTTM_g = \pTTM$ and
$T\pTM\circ \psiTTM_g = T\pTM$.  Again, the fibres of $TTM$ and
$TTM/G$ are isomorphic, so $TTM \simeq (\pTM)^* TTM/G$.  Therefore,
the maps $\equivcl{T\pTM}: TTM/G \to T(TM/G), \equivcl{X} \mapsto
T\pTM(X)$ and $\equivcl{TT\pM}:TTM/G \to TT(M/G), \equivcl{X} \mapsto
TT\pM(X)$ are well-defined and lead to the following commutative
diagram:

\setlength{\unitlength}{1.2cm}
\begin{picture}(14.14,5)(-1.5,5.5)
\put(5.25,9.6){\vector(0,-1){1.6}}
\put(4.8,7.4){\vector(-3,-2){1}}
\put(4.8,9.6){\vector(-1,-2){1.4}} \put(5.7,7.4){\vector(3,-2){1}}
\put(5.7,9.60){\vector(1,-2){1.4}}
\put(4.08,6.4){\vector(1,0){2.71}} \put(5.25,10){\mybox{$TTM$}}
\put(3,6.4){\mybox{$T(TM/G)$}} \put(7.9,6.4){\mybox{$TT(M/G)$}}
\put(5.25,7.6){\mybox{$TTM/G$}}

\put(5.25,6){\mybox{$T\varrho$}}
\put(4.95,7.1){\mybox{\tiny{$\equivcl{T\pTM}$}}}
\put(5.85,6.85){\mybox{\tiny{$\equivcl{TT\pM}$}}}

\put(5.74,8.6){\mybox{$\pTTM$}}
\put(3.8,8.6){\mybox{$T\pTM$}}\put(6.9,8.6){\mybox{$TT\pM$}}

\end{picture}

The above diagram contains bundles over $TM$, $TM/G$ and $T(M/G)$.
All of them are Lie algebroids.  For example, $TTM/G$ is the Atiyah
algebroid of the manifold $TM$.  In addition, all the maps in the
diagram are Lie algebroid morphisms.

First, we will consider the outer triangle consisting of the spaces
$TTM$, $T(TM/G)$ and $TT(M/G)$.  Since $\pTM$ is a principal fibre
bundle, the kernel of $T\pTM$ can be identified with $TM\times \la$,
by means of the identification $(v,\xi)\mapsto \xi_{TM}(v)=
\clift{\xi_M}(v)$.  The two bases $\{X_i, {\tilde E}_a\}$ and
$\{X_i, {\hat E}_a\}$ of $\vectorfields{M}$, based on the moving and
the body-fixed basis, can be used to construct the basis
$\{\clift{X_i}, \clift{{\tilde E}_a}, \vlift{X_i}, \vlift{{\tilde
E}_a} \}$ of $\vectorfields{TM}$, which we call the standard basis,
and also the basis $\{ \clift{X_i}, \clift{{\tilde E}_a},
\vlift{X_i}, \vlift{{\hat E}_a} \}$, which we call the mixed basis.
Since $\xi_{TM} = \clift{\xi_M}$, it is clear that the vector fields
$\clift{{\tilde E}_a}$ span the vertical subbundle of the projection
$\pTM$.  The advantage of the mixed basis over the standard basis is
that the vector fields $\clift{X_i}$, $\vlift{X_i}$ and
$\vlift{{\hat E}_a}$ are all invariant:
\begin{eqnarray*}
\, [\clift{{\tilde E}_a}, \clift{X_i}] = \clift{[{\tilde E}_a, X_i]} = 0 , & &
[\clift{{\tilde E}_a}, \vlift{X_i}] = \vlift{[{\tilde E}_a, X_i]} = 0 \\
\,[\clift{{\tilde E}_a}, \vlift{{\hat E}_b}] =
\vlift{[{\tilde E}_a, {\hat E}_b ]} = 0,
&& [\clift{{\tilde E}_a}, \clift{{\tilde E}_b}] =
\clift{[{\tilde E}_a, {\tilde E}_b]} = C^c_{ab} \clift{{\tilde E}_{c}}.
\end{eqnarray*}
So the vector fields $\clift{X_i}, \vlift{X_i}$ and
$\vlift{\hat{E}_a}$ can be projected to sections of $TTM/G$ (by
means of $\pTTM$) and also to vector fields on $TM/G$ (by means of
$T\pTM$); the latter, denoted by $\ov{\clift{X_i}}$,
$\ov{\vlift{X_i}}$ and $\ov{\vlift{E_a}}$, form a basis of
$\vectorfields{TM/G}$. The following remark may be of some interest.
Observe that $TM/G \to M/G$ is a vector bundle; one can therefore
define a vertical lift operation taking sections of $TM/G \to M/G$
to vertical vector fields on $TM/G$.  The vector fields
$\ov{\vlift{X_i}}$ and $\ov{\vlift{E_a}}$ on $TM/G$ are in fact the
vertical lifts of the sections ${\ov X}_i, {\ov E}_a \in
\Sec(TM/G)$; so we could write $\ov{\vlift{X_i}}=\vlift{\ov{X}_i}$
and $\ov{\vlift{E_a}}=\vlift{\ov{E}_a}$.

The vector fields $\{\clift{\tilde{E}_a},\vlift{\tilde{E}_a}\}$ span the
vertical subbundle of the projection $T\pM$; so the kernel of
$TT\pM$ is isomorphic to $TM\times T\la \simeq TM\times
\la\times\la$, the isomorphism being
$X^a\clift{\tilde{E}_a} + Z^a\vlift{\tilde{E}_a} \mapsto (X^a E_a,Z^aE_a)$.
The vector fields $\{\vlift{\tilde{E}_a}\}$ span the kernel of the projection
$\varrho$; so $\Ker T\varrho$ is isomorphic to $TM\times\la$, by
$Z^a\vlift{\tilde{E}_a} \mapsto Z^aE_a$.

In this way we arrive at the following diagram of short exact
sequences (taking into account the fact that $(T\pM)^* TT(M/G) =
(\pTM)^*\varrho^*TT(M/G)$):

\setlength{\unitlength}{1cm}
\begin{picture}(14.29,6.5)(-3,4)
\put(5.3,9.9){\vector(1,0){2.2}} \put(1.3,7.3){\vector(1,0){2.0}}
\put(5.1,7.3){\vector(1,0){2}} \put(1.14,4.5){\vector(1,0){1.5}}
\put(5.8,4.5){\vector(1,0){1.1}} \put(1.3,9.9){\vector(1,0){1.8}}

\put(8.7,9.5){\vector(0,-1){1.7}} \put(4.2,6.8){\vector(0,-1){1.7}}
\put(4.2,9.5){\vector(0,-1){1.7}} \put(0.6,6.8){\vector(0,-1){1.7}}
\put(0.6,9.5){\vector(0,-1){1.7}} \put(8.7,6.8){\vector(0,-1){1.7}}

\put(0.4,9.9){\mybox{\fbox{$TM\times\la$}}}
\put(4.2,9.9){\mybox{$TM\times \la\times \la$}}
\put(8.7,9.9){\mybox{\fbox{$TM\times \la$}}}

\put(0.5,7.3){\mybox{$TM\times \la$}}
\put(4.2,7.3){\mybox{\fbox{$TTM$}}}
\put(8.7,7.3){\mybox{$(\pTM)^*T(TM/G)$}}

\put(0.6,4.5){\mybox{$\fbox{0}$}}
\put(4.2,4.2){\mybox{$(\pTM)^*\varrho^*TT(M/G)$}}
\put(4.2,4.8){\mybox{$(T\pM)^*TT(M/G)$}} \put(4.2,4.5){\mybox{=}}
\put(8.7,4.5){\mybox{\fbox{$(\pTM)^*\varrho^*TT(M/G)$}}}

\put(6.2,7.6){\mybox{$T\pTM$}} \put(4.9,6){\mybox{$TT\pM$}}
\put(9.6,6){\mybox{$(\pTM)^*T\varrho$}}

\end{picture}

There is a similar diagram for the spaces over $M/G$:

\setlength{\unitlength}{1cm}
\begin{picture}(14.29,6.5)(-3,4)
\put(5.1,9.9){\vector(1,0){2.0}} \put(1.14,7.3){\vector(1,0){2.0}}
\put(5.1,7.3){\vector(1,0){2.0}} \put(1.14,4.5){\vector(1,0){1.9}}
\put(5.3,4.5){\vector(1,0){1.6}} \put(1.14,9.9){\vector(1,0){2.0}}

\put(8.13,9.5){\vector(0,-1){1.7}}
\put(4.2,6.8){\vector(0,-1){1.7}}
\put(4.2,9.5){\vector(0,-1){1.7}}
\put(0.6,6.8){\vector(0,-1){1.7}}
\put(0.6,9.5){\vector(0,-1){1.7}}
\put(8.13,6.8){\vector(0,-1){1.7}}

\put(0.6,9.9){\mybox{\fbox{$\otau^*\og$}}}
\put(4.2,9.9){\mybox{$\otau^*(\og\times\og)$}}
\put(8.13,9.9){\mybox{\fbox{$\otau^*\og$}}}

\put(0.6,7.3){\mybox{$\otau^*\og$}}
\put(4.2,7.3){\mybox{\fbox{$TTM/G$}}}
\put(8.13,7.3){\mybox{$T(TM/G)$}}

\put(0.6,4.5){\mybox{\fbox{$0$}}}
\put(4.2,4.5){\mybox{$\varrho^*TT(M/G)$}}
\put(8.13,4.5){\mybox{\fbox{$\varrho^*TT(M/G)$}}}

\put(6.2,7.6){\mybox{$\equivcl{T\pTM}$}} \put(5,6){\mybox{$\equivcl{TT\pM}$}}
\put(8.5,6){\mybox{$T\varrho$}}

\end{picture}

Recall that $\ov\tau$ is the projection $TM/G \to M/G$.  The
identification $(TM\times \la)/ G \simeq {\ov\tau}^*\og $ is given
explicitly by $\equivcl{v_m,\xi} \mapsto (\equivcl{v_m},\equivcl{m,\xi})$.

We will use the basis $\{\fpd{}{x^i},\fpd{}{v^i}\}$ for
$\vectorfields{T(M/G)}$.  The basic sections
$\{\fpd{}{x^i},\fpd{}{v^i}\}$ can also be used for bases of vector
fields along the projection in the case of certain pull-back bundles,
that is both for $\Sec((T\pM)^*TT(M/G))$ and $\Sec(\varrho^*TT(M/G))$.
Finally, we will also use $\{\ov{\clift{X_i}},
\ov{\vlift{X_i}},\ov{\vlift{E_a}}\}$ as a basis for
$\Sec((\pTM)^*T(TM/G))$.

The two square commutative diagrams above will play the same role as
the short exact sequences (\ref{short1}) and (\ref{short2}) in the
first-order case.  We will show that a principal connection on $M$
induces splittings for all the short exact sequences in the squares.
The idea is that connections of the $M$ square which are
$G$-invariant in the appropriate sense automatically give rise to
connections for the $M/G$ square and vice versa. We shall explicitly
construct connections in both the middle horizontal and vertical
sequences in the $M$ square diagram, and use the general results at
the beginning of this section to complete the task.

The first induced connection lives on the middle horizontal line of
the $M$ square diagram; it is the so-called vertical lift of the
principal connection on $M$.

\begin{prop}
Suppose given a connection on a principal $G$-bundle $\pM:M\to M/G$,
specified by its connection form $\varpi$. The pull-back
$\tau^*\varpi$ of $\varpi$ to $TM$ (where $\tau:TM\to M$ is the
tangent bundle projection) is the connection form of a principal
connection on the principal $G$-bundle $TM\to TM/G$.
\end{prop}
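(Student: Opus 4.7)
The plan is to verify directly the two defining properties of a principal connection form for $\tau^*\varpi$ on the principal bundle $TM\to TM/G$: first, that $(\tau^*\varpi)(\xi_{TM})=\xi$ for every $\xi\in\la$, where $\xi_{TM}$ is the fundamental vector field of the lifted $G$-action on $TM$; and second, that $\tau^*\varpi$ is $\Ad$-equivariant under $\psiTM$. These two conditions together say precisely that $\tau^*\varpi$ is a $\la$-valued 1-form on $TM$ giving a right splitting of the Atiyah-type sequence for $\pTM$, i.e.\ a principal connection.

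For the first property, I would invoke the identification $\xi_{TM}=\clift{\xi_M}$ already noted in the preceding discussion, together with the elementary fact that the complete lift of any vector field $Y$ on $M$ is $\tau$-related to $Y$, so $T\tau\circ\clift{Y}=Y\circ\tau$. Applied to $Y=\xi_M$ and evaluated at $v\in TM$, this gives
\[
(\tau^*\varpi)(\xi_{TM}(v))=\varpi\bigl(T\tau(\clift{\xi_M}(v))\bigr)=\varpi(\xi_M(\tau(v)))=\xi,
\]
where the last equality uses that $\varpi$ is itself the connection form of a principal connection on $\pM$.

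For equivariance, the key point is that the tangent bundle projection intertwines the two $G$-actions: since $\psiTM_g=T\psiM_g$, we have $\tau\circ\psiTM_g=\psiM_g\circ\tau$, and therefore $T\tau\circ T\psiTM_g=T\psiM_g\circ T\tau$. For any $X_v\in T_v(TM)$ I would then compute
\[
(\tau^*\varpi)(T\psiTM_g(X_v))=\varpi\bigl(T\psiM_g(T\tau(X_v))\bigr)=\Ad_{g^{-1}}\varpi(T\tau(X_v))=\Ad_{g^{-1}}(\tau^*\varpi)(X_v),
\]
the middle equality being the $\Ad$-equivariance of $\varpi$ on $M$.

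There is no real obstacle here: the proposition is essentially a functorial statement, and everything reduces to the naturality of $\tau$ with respect to the $G$-actions, the intertwining property $T\tau\circ\clift{Y}=Y\circ\tau$, and the corresponding properties already assumed for $\varpi$. The only substantive piece of input is the identification of fundamental vector fields on $TM$ with complete lifts, which is recorded earlier in the excerpt and which makes the vertical subbundle of $\pTM$ manifestly the image of $\clift{(\cdot)_M}:\la\to\vectorfields{TM}$.
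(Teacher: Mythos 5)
Your proposal is correct and follows essentially the same route as the paper's own proof: both verify the reproducing property via the identification $\xi_{TM}=\clift{(\xi_M)}$ and $T\tau(\xi_{TM})=\xi_M$, and both obtain $\Ad$-equivariance from the fact that $\tau$ intertwines the $G$-actions, i.e.\ $\tau\circ\psiTM_g=\psiM_g\circ\tau$, which is exactly the paper's computation $\psiTM_g{}^*\tau^*\varpi=\tau^*\psiM_g{}^*\varpi=\ad_{g^{-1}}\tau^*\varpi$ written out pointwise.
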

\begin{proof}
 Clearly,
$\tau^*\varpi$ is a $\la$-valued 1-form on $TM$.  The action
$\psiTM$ of $G$ on $TM$ is $\tau$-related to the action $\psiM$ on
$M$. Moreover, the fundamental vector fields corresponding to the
two actions are related by $\xi_{TM}=\clift{(\xi_M)}$ for any
$\xi\in\la$; and in particular $T\tau(\xi_{TM})=\xi_M$.  Thus
\[
\tau^*\varpi(\xi_{TM})=\varpi(T\tau(\xi_{TM}))
=\varpi(\xi_M)=\xi,
\]
while
\[
\psiTM_g{}^*\tau^*\varpi=\tau^*\psiM_g{}^*\varpi=\ad_{g^{-1}}\tau^*\varpi,
\]
as required.\end{proof}
 The connection defined by $\tau^*\varpi$ is
called the vertical lift of the original connection; its right and
left splittings are denoted by $\vlift{\gamma}$ and $\vlift{\omega}$
(so that the connection form $\vlift{\varpi}$ is just given by
$\vlift{\varpi}=\tau^*\varpi$). The right splitting $\vlift{\gamma}$
at the level of the $M$ square can be given as follows.  Let
$\pTM(v)=\tilde{v}$; then
\[
\vlift{\gamma}: (\pTM)^*T(TM/G) \to TTM,\qquad (v,X_{\tilde v})
\mapsto W,
\]
where $W$ is determined by the condition $T\pTM(W) = X_{\tilde v}$
and $T\tau(W)=\gamma(m,T\ov\tau(X_{\tilde v}))$, where $m=\tau(v)$.
The first conditions shows that the above defines a splitting. To
see that it is the one that corresponds with the vertical lift
connection, we give the actions of $\vlift{\gamma}$ and
$\vlift{\omega}$ on the basis vector fields. We have
$T\tau(\clift{X_i}) = X_i \circ \tau$, $T\tau(\vlift{X_i}) = 0$ and
$T\tau(\vlift{\hat{E}_a}) = 0$. Likewise,
$T\ov\tau(\ov{\clift{X_i}}) = \fpd{}{x^i} \circ \ov\tau$,
$T\ov\tau(\ov{\vlift{X_i}}) = 0$ and $T\ov\tau(\ov{\vlift{E_a}}) =
0$, where $\ov\tau: TM/ G \to M/G$.  It follows that
\[
\vlift{\gamma}(\ov{\clift{X_i}}) = \clift{X_i}, \quad
\vlift{\gamma}(\ov{\vlift{X}_i}) = \vlift{X_i}, \quad
\vlift{\gamma}(\ov{\vlift{E_a}}) = \vlift{\hat{E}_a},
\]
and from these formulas we get for the associated left splitting
\[
\vlift{\omega}(\clift{X_i}) = 0, \quad
\vlift{\omega}(\vlift{X_i}) = 0, \quad
\vlift{\omega}(\clift{\tilde{E}_a}) = \clift{\tilde{E}_a} \quad \mbox{and}\quad
\vlift{\omega}(\vlift{\hat{E}_a}) = 0=\vomega(\vlift{\tilde{E}_a}).
\]
From the above relations it is clear that
$\vlift{\varpi}=\tau^*\varpi$, and that
therefore $\vlift{\gamma}$ is indeed the right splitting corresponding
to the vertical lift of the principal connection on $M$ specified by
$\varpi$, as it was defined initially.

One can find a connection whose definition is somewhat similar to
that of the vertical lift connection in \cite{MMR}, albeit in a much
less general context: the authors deal only with a Lagrangian system
with symmetry and restrict the dynamics to a particular value of the
momentum map.

The second connection of interest is a connection on the middle
vertical line in the $M$ square diagram, that is, it is a connection
on the bundle $T\pM: TM \to T(M/G)$. It is in fact a particular case
of a quite general construction which can be described as follows.

We first make an obvious remark.  The complete lift operation
$\vectorfields{M}\to\vectorfields{TM}$, $X\mapsto \clift{X}$, is not
$C^\infty(M)$-linear:\ in fact for a function $f$ on $M$ we have
$\clift{(fX)}=f\clift{X}+\dot{f}\vlift{X}$, where $\dot{f}$ is the
total derivative of $f$; the point to note is that $\clift{(fX)}$ is a
$C^\infty(TM)$-linear combination of $\clift{X}$ and $\vlift{X}$.
Suppose now that $M$ is equipped with a distribution (vector field
system) $\D$.  Let $\{X_i\}$ be a local vector field basis for $\D$,
and consider the local vector fields $\{\clift{X_i},\vlift{X_j}\}$ on
$TM$:\ they are linearly independent, and there are $2\dim\D$ of them.
Furthermore, if $\{Y_i\}$ is another local basis for $\D$ then the
span of $\{\clift{Y_i},\vlift{Y_j}\}$ coincides with the span of
$\{\clift{X_i},\vlift{X_j}\}$, as follows from the observation above
about $\clift{(fX)}$.  The span of $\{\clift{X_i},\vlift{X_j}\}$,
where $\{X_i\}$ is any local basis of $\D$, accordingly defines a
$2\dim\D$-dimensional distribution $\D'$ on $TM$.  Suppose next that
$\phi$ is a diffeomorphism of $M$ and $\clift{\phi}$ is the induced
diffeomorphism of $TM$.  Denote by $\phi^\X$ the action of $\phi$ on
vector fields on $M$, ${\clift{\phi}}^\X$ the action of $\clift{\phi}$
on vector fields on $TM$.  Then
$\clift{\phi^\X(X)}={\clift{\phi}}^\X(\clift{X})$ and
$\vlift{\phi^\X(X)}={\clift{\phi}}^\X(\vlift{X})$ (these are the
integrated versions of two formulas for brackets between complete and
vertical lifts which we used earlier).  Thus if $\D$ is invariant
under the action of some group $G$ on $M$ then $\D'$ is invariant
under the induced action of $G$ on $TM$.  Now let $M\to M/G$ be a
principal $G$-bundle and $\D$ the horizontal distribution of a
principal connection:\ then $\D'$ is a $G$-invariant distribution on
$TM$ which is transverse to the fibres of $TM\to T(M/G)$, that is, a
connection on $TM\to T(M/G)$, which is $G$-invariant in the
appropriate sense.

It is easy to describe the left splitting of the new connection, as
follows.

The complete lift construction can be extended from vector fields to
tensor fields, as is shown in \cite{TM}. In particular, given a type
$(1,1)$ tensor field $A$ on a manifold $M$, its complete lift
$\clift{A}$ is a type $(1,1)$ tensor field on $TM$ with the following
properties:
\[
\clift{A}(\vlift{X})=\vlift{A(X)},\quad
\clift{A}(\clift{X})=\clift{A(X)},\quad
\lie{\clift{X}}\clift{A}=\clift{(\lie{X}A)},
\]
for any vector field $X$ on $M$. Moreover, for any two type
$(1,1)$ tensor fields $A$, $B$ on $M$,
$\clift{A}\clift{B}=\clift{(AB)}$. The complete lift $\clift{A}$ may
be described explicitly as follows. Regard $A$ as a fibre-linear map
$TM\to TM$, fibred over the identity. Let $\sigma: TTM \to TTM$
denote the canonical involution:\ then $\clift{A}$, regarded as a
fibre-linear map $TTM\to TTM$, is given by $\clift{A}=\sigma\circ
TA\circ\sigma$ (where $TA$ is the tangent map, or differential, of
the map $A$).

\begin{prop}
Consider the left splitting $\omega:TM \to M\times\la$ of a
principal connection on $\pM:M\to M/G$. The complete lift
$\clift\omega$ of $\omega$ defines a $G$-invariant connection on
$TM\to T(M/G)$.
\end{prop}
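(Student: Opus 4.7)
The plan is to verify the two defining properties of a left splitting for $\clift\omega$ — that it projects $TTM$ onto the subbundle $\Ker(TT\pM)$ and restricts to the identity there — and then establish $G$-invariance. The key ingredients will be the characterising properties of the complete lift of a type-$(1,1)$ tensor recalled just above the statement: $\clift A(\vlift X)=\vlift{(AX)}$, $\clift A(\clift X)=\clift{(AX)}$, $\clift A\clift B=\clift{(AB)}$, and $\lie{\clift X}\clift A=\clift{(\lie X A)}$, together with the identification of $\Ker(TT\pM)$ already carried out in the construction of the $M$-square, namely that it is spanned pointwise by $\{\clift{\tilde E_a},\vlift{\tilde E_a}\}$.

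First I would evaluate $\clift\omega$ on the mixed basis $\{\clift{X_i},\clift{\tilde E_a},\vlift{X_i},\vlift{\tilde E_a}\}$ of $\vectorfields{TM}$. Because $\omega$ is the projection onto $\Ker T\pM$, we have $\omega(X_i)=0$ and $\omega(\tilde E_a)=\tilde E_a$, so the first two properties of the complete lift immediately give
\[
\clift\omega(\clift{X_i})=\clift\omega(\vlift{X_i})=0,\qquad
\clift\omega(\clift{\tilde E_a})=\clift{\tilde E_a},\qquad
\clift\omega(\vlift{\tilde E_a})=\vlift{\tilde E_a}.
\]
These relations display $\clift\omega$ as a fibre projection whose image is exactly $\Ker(TT\pM)$ and whose kernel is the horizontal distribution $\D'=\mathrm{span}\{\clift{X_i},\vlift{X_i}\}$ introduced above; idempotency also drops out directly from the third property, since $\clift\omega\circ\clift\omega=\clift{(\omega\circ\omega)}=\clift\omega$.

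For $G$-invariance, note that the assumption that the original connection on $\pM$ is principal means that $\omega$ is $G$-invariant on $M$; with $G$ connected this is equivalent to $\lie{\xi_M}\omega=0$ for every $\xi\in\la$. The fourth property of the complete lift then yields $\lie{\clift{\xi_M}}\clift\omega=\clift{(\lie{\xi_M}\omega)}=0$, and since the fundamental vector fields for the induced actions obey $\xi_{TM}=\clift{\xi_M}$ and $\xi_{TTM}=\clift{(\xi_{TM})}$, this is precisely the infinitesimal invariance of $\clift\omega$ under $\psi^{TTM}$; connectedness of $G$ upgrades this to full invariance.

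No step should present a serious obstacle: the identification of $\Ker(TT\pM)$ with $\mathrm{span}\{\clift{\tilde E_a},\vlift{\tilde E_a}\}$ is already in place from the $M$-square construction, and everything else is a mechanical application of the abstract properties of complete lifts. The only thing to watch is the bookkeeping of the three successive layers of lifted actions on $M$, $TM$ and $TTM$, which the identities $\xi_{TM}=\clift{\xi_M}$ and $\xi_{TTM}=\clift{(\xi_{TM})}$ handle cleanly.
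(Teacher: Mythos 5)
Your proposal is correct and follows essentially the same route as the paper's proof: both arguments rest on the idempotency $\clift{\omega}\circ\clift{\omega}=\clift{(\omega^2)}=\clift{\omega}$, the evaluation of $\clift{\omega}$ on complete and vertical lifts to show it annihilates $\D'$ and acts as the identity on $\Ker(TT\pM)$, and the Lie-derivative identity $\lie{\clift{\xi_M}}\clift{\omega}=\clift{(\lie{\xi_M}\omega)}=0$ for invariance. The only cosmetic difference is that you work with the basis $\{\tilde E_a\}$ where the paper uses an arbitrary fundamental vector field $\xi_M$.
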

\begin{proof}
As we pointed out earlier, $\omega$ can be considered as a type
$(1,1)$ tensor field on $M$, when we regard $M\times\la$ as a
subbundle of $TM$; from this point of view, for each $m\in M$,
$\omega_m$ is the projection onto the vertical subspace of $T_mM$
along the horizontal subspace; since it is a projection operator
$\omega$ satisfies $\omega^2=\omega$.  The fact that the connection
is principal is equivalent to the fact that, as a type $(1,1)$
tensor, $\omega$ is $G$-invariant, which is to say that
$\omega\circ\psi^{\mathcal{X}}_g=\psi^{\mathcal{X}}_g\circ\omega$
for all $g\in G$, where $\psi^{\mathcal{X}}$ is the $G$-action on
vector fields.  When $G$ is connected the latter condition is
equivalent to $\lie{\xi_M}\omega=0$ for all $\xi\in\la$.  We take
the complete lift $\clift{\omega}$, to obtain a type $(1,1)$ tensor
field on $TM$.  Now
$(\clift{\omega})^2=\clift{(\omega^2)}=\clift{\omega}$, so
$\clift{\omega}$ is a projection operator.  From the formulas for
the action of $\clift{\omega}$ on vertical and complete lifts it is
clear that it vanishes on vertical and complete lifts of vector
fields which are horizontal with respect to $\omega$, that is, on
$\D'$.  Moreover, for any $\xi\in\la$ we have
\[
\clift{\omega}(\vlift{\xi_M})=\vlift{(\omega(\xi_M))}
=\vlift{\xi_M},\quad
\clift{\omega}(\clift{\xi_M})=\clift{(\omega(\xi_M))}
=\clift{\xi_M},
\]
so that $\Im(\clift{\omega})$ can be identified with $\la\times\la$
in the required manner.

 Finally, we have
\[
\lie{\xi_{TM}}\clift{\omega}=
\lie{\clift{\xi_M}}\clift{\omega}=\clift{(\lie{\xi_M}\omega)}=0,
\]
which expresses the $G$-invariance of $\clift{\omega}$.\end{proof}

The connection determined by $\comega$ was first described, in its
essentials, by Vilms \cite{Vilms}.  In fact it was shown in
\cite{Vilms} that a connection on a vector bundle $E\to X$ induces a
connection on the bundle $TE \to TX$.  Of course the bundle $\pM: M
\to M/G$ we are dealing with in the current situation is not a
vector bundle; nevertheless, Vilms's result may be extended to cover
it.  We therefore call this connection the Vilms connection;
however, we denote its splittings by $\cgamma$ and $\comega$ (as
before).

Note the important but somewhat subtle difference between the
constructions of the two connections:\ in constructing the vertical
lift connection we specify the initial connection by the $\la$-valued
connection form $\varpi$, but in constructing the Vilms connection we
specify it by the type $(1,1)$ tensor field $\omega$ defining the
right splitting.  We mention this because there is a concept of the
vertical lift of a type $(1,1)$ tensor field, and it is important to
realise that we do not use this concept here.

The right splitting $\cgamma$ of the Vilms connection is a map
$(T\pM)^*TT(M/G) \to TTM$, which may be specified as follows.  We
denote by $\ov\sigma$ the canonical involution of $TT(M/G)$.  Let $T\pM (v) =
\ov v$. The right splitting of the Vilms connection is given by
\[
\cgamma: (v, Y_{\ov v}) \mapsto \sigma\left(T\gamma(v,
{\ov\sigma} (Y_{\ov v})) \right).
\]

The fact that this is a splitting is due to the property $TT\pM
\circ \sigma = {\ov\sigma} \circ TT\pM$, as it is easy to see.  Indeed,
\[
TT\pM \circ\cgamma (v,Y) = {\ov\sigma} \circ T(T\pM\circ
\gamma)(v,{\ov\sigma}(Y)) = \ov\sigma \circ \ov\sigma (Y)=Y.
\]
We calculate the corresponding right splitting, and confirm that it
is $\comega$. The right splitting is given by $\id - \cgamma \circ
TT\pM$; we have
\begin{eqnarray*}
\id - \cgamma \circ TT\pM
&=& \id -\sigma\circ T\gamma \circ \ov\sigma \circ TT\pM\\
&=& \id - \sigma\circ T\gamma \circ  TT\pM \circ \sigma
=\id - \sigma \circ T(\gamma\circ T\pM) \circ \sigma\\
&=& \id - \sigma \circ T(\id -\omega)\circ \sigma
= \sigma \circ T \omega \circ \sigma=\comega.
\end{eqnarray*}
In terms of the standard basis
$\{\clift{X_i}, \clift{\tilde{E}_a}, \vlift{X_i},
\vlift{\tilde{E}_a}\}$ we have
\[
\comega(\clift{X_i}) = 0, \quad
\comega(\vlift{X_i}) = 0, \quad
\comega(\clift{\tilde{E}_a}) = \clift{\tilde{E}_a} \quad\mbox{and}\quad
\comega(\vlift{\tilde{E}_a}) = \vlift{\tilde{E}_a};
\]
equally, $\comega(\vlift{\hat{E}_a}) = \vlift{\hat{E}_a}$.
Since $TT\pM(\clift{X_i}) = \fpd{}{x^i} \circ T\pM$ and
$TT\pM(\vlift{X_i}) = \fpd{}{v^i} \circ T\pM$, it also follows that
\[
\cgamma\left(\fpd{}{x^i}\right) = \clift{X_i}
-\comega(\clift{X_i}) = \clift{X_i}
\quad \mbox{and} \quad
\cgamma\left(\fpd{}{v^i}\right) = \vlift{X_i}
-\comega(\vlift{X_i}) = \vlift{X_i}.
\]

We noted above that because of the invariance of $\omega$, the right
splitting $\comega$ of the Vilms connection is invariant.  It
follows that the Vilms connection can be quotiented to give a
connection on the $M/G$-square.

As a consequence of the existence of the two connections described so
far, we can deduce for both square diagrams a third connection.
Clearly $\Im(\cgamma)\subset\Im(\vlift{\gamma})$, so there is a
connection $\gamma'$ such that $\cgamma =\vlift{\gamma} \circ
\gamma'$, and a connection $\ov{\gamma'}$ such that and $\ov{\cgamma}
= \ov{\vgamma} \circ \ov{\gamma'}$.  For the appropriate bases
\[
\ov{\gamma'}(\fpd{}{x^i}) = \ov{\clift{X_i}} \qquad \mbox{and} \qquad
\ov{\gamma'}(\fpd{}{v^i}) = \ov{\vlift{X_i}},
\]
and
\[
\ov{\omega'}(\ov{\clift{X_i}}) = 0, \quad
\ov{\omega'}(\ov{\vlift{X_i}}) = 0
\quad \mbox{and} \quad
\ov{\omega'}(\ov{\vlift{E_a}}) = \ov{\vlift{E_a}}.
\]

\section{Second-order systems}

We now come to the consideration of second-order systems.  We assume
given a \sode, that is, a vector field $\Gamma$ on $TM$ such that
$T\tau\Gamma(v)=v$ for all $v\in TM$, where $\tau: TM\to M$ is the
tangent bundle projection.  Furthermore, we assume that $\Gamma$ is
$G$-invariant, so that it satisfies $\psiTTM_g\Gamma(v) =
\Gamma(\psiTM_gv)$.  There is therefore a section $\ov\Gamma$ of
$TTM/G$ such that $\pTTM\circ \Gamma = \ov\Gamma \circ \pTM$.  Under
the appropriate maps $\Gamma$ projects onto $\Gamma_1$ and $\Gamma_2$
as shown below, and can be decomposed into elements which are boxed in
the diagram.  Analogously, $\ov\Gamma$ projects on ${\ov\Gamma}_1$ and
${\ov\Gamma}_2$, and has a similar decomposition.  Of course, all the
elements are related to each other in an appropriate way; for example
$\Gamma_1\in \Sec((\pTM)^*T(TM/G))$, which is given by $\Gamma_1(v) =
(v, T\pTM(\Gamma(v)))$, can be reduced to the vector field
${\ov\Gamma}_1\in \vectorfields{TM/G}$.

\begin{tabular}{ll}
\begin{minipage}{8cm}

Sections of bundles over $M$:

\setlength{\unitlength}{0.8cm}
\begin{picture}(14.29,6.5)(0,4)
\put(5.1,9.9){\vector(1,0){2.0}} \put(1.4,7.3){\vector(1,0){1.9}}
\put(5.1,7.3){\vector(1,0){2.0}} \put(1.4,4.5){\vector(1,0){1.9}}
\put(5.1,4.5){\vector(1,0){2.0}} \put(1.4,9.9){\vector(1,0){1.9}}

\put(8.13,9.5){\vector(0,-1){1.7}}
\put(4.2,6.8){\vector(0,-1){1.7}}
\put(4.2,9.5){\vector(0,-1){1.7}}
\put(0.6,6.8){\vector(0,-1){1.7}}
\put(0.6,9.5){\vector(0,-1){1.7}}
\put(8.13,6.8){\vector(0,-1){1.7}}

\put(0.6,9.9){\mybox{\fbox{$\vomega(\Gamma)$}}}
\put(4.2,9.9){\mybox{$\comega(\Gamma)$}}
\put(8.13,9.9){\mybox{\fbox{$\omega'(\Gamma_1)$}}}

\put(0.6,7.3){\mybox{$\vomega(\Gamma)$}}
\put(4.2,7.3){\mybox{\fbox{$\Gamma$}}}
\put(8.13,7.3){\mybox{$\Gamma_1$}}

\put(0.6,4.5){\mybox{$\fbox{0}$}}
\put(4.2,4.5){\mybox{$\Gamma_2$}}
\put(8.13,4.5){\mybox{\fbox{$\Gamma_2$}}}

\put(6.2,7.6){\mybox{$T\pTM$}} \put(4.9,6){\mybox{$TT\pM$}}
\put(9.1,6){\mybox{$(\pTM)^*T\varrho$}}

\end{picture}

\end{minipage}

&

\begin{minipage}{9cm}

Sections of bundles over $M/G$:

\setlength{\unitlength}{0.8cm}
\begin{picture}(14.29,6.5)(0,4)
\put(5.1,9.9){\vector(1,0){2.0}} \put(1.4,7.3){\vector(1,0){1.9}}
\put(5.1,7.3){\vector(1,0){2.0}} \put(1.4,4.5){\vector(1,0){1.9}}
\put(5.1,4.5){\vector(1,0){2.0}} \put(1.4,9.9){\vector(1,0){1.9}}

\put(8.13,9.5){\vector(0,-1){1.7}}
\put(4.2,6.8){\vector(0,-1){1.7}}
\put(4.2,9.5){\vector(0,-1){1.7}}
\put(0.6,6.8){\vector(0,-1){1.7}}
\put(0.6,9.5){\vector(0,-1){1.7}}
\put(8.13,6.8){\vector(0,-1){1.7}}

\put(0.6,9.9){\mybox{\fbox{$\ov{\vomega}(\ov\Gamma)$}}}
\put(4.2,9.9){\mybox{$\ov{\comega}(\ov\Gamma)$}}
\put(8.13,9.9){\mybox{\fbox{$\ov{\omega'}(\ov{\Gamma}_1)$}}}

\put(0.6,7.3){\mybox{$\ov{\vomega}(\ov\Gamma)$}}
\put(4.2,7.3){\mybox{\fbox{$\ov\Gamma$}}}
\put(8.13,7.3){\mybox{${\ov\Gamma}_1$}}

\put(0.6,4.5){\mybox{$\fbox{0}$}}
\put(4.2,4.5){\mybox{${\ov\Gamma}_2$}}
\put(8.13,4.5){\mybox{\fbox{${\ov\Gamma}_2$}}}

\put(6.2,7.6){\mybox{$\equivcl{T\pTM}$}} \put(5,6){\mybox{$\equivcl{TT\pM}$}}
\put(8.5,6){\mybox{$T\varrho$}}

\end{picture}

\end{minipage}

\end{tabular}

In fact the connections give a $G$-invariant decomposition of $\Gamma$
into three parts:
\begin{eqnarray*}
\Gamma &=& \vgamma(\Gamma_1) + \vomega(\Gamma)\\
&=&\cgamma(\Gamma_2) + \comega(\Gamma)\\
&=&\cgamma(\Gamma_2)
+ \vgamma(\omega'(\Gamma_1)) + \vomega(\Gamma).
\end{eqnarray*}
Since $\Gamma$ is a \sode\ the $\la$-valued function $\vlift{\varpi}(\Gamma)$
on $TTM$ is given by
\[
\vlift{\varpi}(\Gamma(v))=\varpi(T\tau(\Gamma(v))=\varpi(v)
\]
for all $v\in TM$; thus (for a given choice of $\varpi$)
$\vlift{\varpi}(\Gamma)$ is the same for all \sode s $\Gamma$.

There is an analogous three-way decomposition of $\ov\Gamma$, as
shown in the right-hand diagram. Here ${\ov\Gamma}_1$ is a vector
field on $TM/G$, ${\ov\Gamma}_2$ is a section of $\varrho^*TT(M/G)$
(i.e.\ a vector field along $\varrho$) and
$\ov{\omega'}({\ov\Gamma}_1)\in \Sec(TM/G \times \og)$.

As well as being a section of $TTM/G\to TM/G$, $\ov\Gamma$ may also
be regarded as a section of the so-called prolongation bundle
$T^\varrho (TM/G)\to TM/G$, whose fibre at $\tilde v \in TM/G$ is
\[
T^\varrho_{\tilde v} (TM/G) = \{ (\tilde w, X_{\tilde v}  ) \in TM/G
\times T(TM/G) \mid \varrho(\tilde w) = T\ov\tau (X_{\tilde v})\}.
\]
Theorem 9.1 of \cite{DMM} shows that the identification of the
quotient bundle $TTM/G\to TM/G$ with the above bundle is given by
the isomorphism
\[
TTM/G \to T^\varrho (TM/G),\qquad \equivcl{W} \mapsto (\pTM
T\tau(W), T\pTM(W)).
\]
This map is independent of the choice of $W$ since for any other
$\psiTTM_g W$ within the same equivalence class, $T\tau(\psiTTM_g W)
= T (\tau \circ \psiTM_g) (W) =  T\tau (W)$.
The second component of the isomorphism is in fact the map
$\equivcl{T\pTM}:TTM/G \to T(TM/G)$. Keeping in mind that $\Gamma$
is a \sode, ${\ov\Gamma} \in \Sec(TTM/G)$ can be identified with the
section $\tilde v \mapsto (\tilde v, {\ov\Gamma}_1(\tilde v))$ of
the prolongation bundle. As a consequence, the composing parts of
this section satisfy $T{\ov\tau} ({\ov\Gamma}_1(\tilde v)) =
\varrho(\tilde v)$. This property clearly resembles the defining
property $T\tau(\Gamma(v))=v$ of a \sode; sections of the
prolongation bundle of the above form were therefore called `pseudo
second-order differential equation sections' in e.g.~\cite{Me} or
`second-order differential equations' in e.g.~\cite{DMM}.

Next, we will discuss the reconstruction process.  We will use the
following notations.  Let $v(t)\in TM$ denote an integral curve of
$\Gamma$ and let $c(t)$ be the corresponding base integral curve,
that is, $c(t) = \tau(v(t))\in M$.  It follows from the fact that
$\Gamma$ is a \sode\ that $v=\dot{c}$ (when we consider the latter
as a curve in $TM$).  We will write ${\tilde v}(t)= \pTM(v(t)) \in
TM/G$ and ${\ov c}(t) = \pM (c(t)) \in M/G$.  Obviously,
$\ov\tau(\tilde v) = \ov c$ and moreover $T\pM (v(t))=
\varrho({\tilde v}(t)) = {\dot{\ov c}}(t) \in T(M/G)$. In a previous
section we encountered the horizontal lift of ${\ov c}$ (with
respect to the connection on $\pM$), which we denoted by ${\ov
c}^\gamma$.

We first note that the vertical lift connection is a principal
connection on the principal fibre bundle $\pTM: TM \to TM/G$.  So
just as in the first-order case we can construct the integral curve
$t\mapsto v(t)$ of the invariant vector field
$\Gamma\in\vectorfields{TM}$ from an integral curve $t\mapsto{\tilde
v}(t)=\pTM(v(t))$ of the reduced vector field ${\ov\Gamma}_1 \in
\vectorfields{TM/G}$, by
\begin{itemize}
\item taking the horizontal lift $\tilde{v}^{\gamma}$ of
$\tilde{v}$ through $v(0)$ (with respect to $\vgamma$), and
\item finding the solution $t\mapsto g(t)\in G$ of the equation
\begin{equation} \label{omegaVeq}
\theta(\dot{g})=\vlift{\varpi}(\Gamma\circ\tilde{v}^{\gamma})
\end{equation}
with $g(0)=e$ (where $\theta$ is the Maurer-Cartan form of $G$);
\end{itemize}
the required integral curve is given by
\[
v(t)=\psiTM_{g(t)}\tilde{v}^{\gamma}(t).
\]
The right-hand side of equation~(\ref{omegaVeq}) can equally well be
written as $\varpi(\tilde{v}^{\gamma})$.

Let us look at the relation that determines the horizontal lift
${\tilde v}^\gamma$ of $\tilde{v}$ to $TM$. Let $\tilde v$ be a
given curve in $TM/G$ (not necessarily an integral curve of
${\ov\Gamma}_1$). By definition, ${\tilde v}^\gamma$ projects onto
$\tilde v$ and is a solution of
\[
{\dot{\tilde v}}^\gamma = \vgamma({\tilde v}^\gamma, \dot{\tilde
v}).
\]
From the conditions that determine $\vgamma$, we know that this is
equivalent with the properties $T\pTM\circ{\dot{\tilde v}}^\gamma =
\dot{\tilde v}$, and $T\tau\circ{\dot{\tilde v}}^\gamma = \gamma
(T\ov\tau\circ\dot{\tilde v})$. The first property simply recalls
that $\pTM(\tilde v^\gamma) = \tilde v$. If we denote as before
$\ov\tau\circ\tilde v = \ov c$, then $T\ov\tau\circ\dot{\tilde v}=
\dot{\ov c}$. So, we can deduce from the second property that
$T\tau\circ{\dot{\tilde v}}^\gamma = \dot{\ov{c}}^\gamma$. To
conclude, the curve ${\tilde v}^\gamma$ is completely determined by
the properties $\tau\circ{\tilde v}^\gamma=\ov{c}^\gamma$ and
$\pTM\circ{\tilde v}^\gamma = \tilde v$.

Any element of the vector bundle $TM/G\to M/G$ can be written as a
sum of two parts via the splitting $\ov{\gamma}$ of the Atiyah
sequence~(\ref{short2}):
\[
{\tilde v} = \ov\xi + \ov\gamma(\dot{\ov c}),
\]
where $\ov\xi= \ov\omega (\tilde v) \in \ov\la$. We can use this to
give a more explicit formulation of ${\tilde v}^\gamma$. Let $\xi(t)
\in \la$ be such that $\ov\xi = \equivcl{{\ov c}^\gamma, \xi}$. Then
the curve $\xi_M\circ{\ov c}^\gamma + \dot{{\ov c}}^\gamma$ in $TM$
projects onto ${\ov c}^\gamma$ by means of $\tau$ and projects onto
${\tilde v} = \ov\xi + \ov\gamma(\dot{\ov c})$ by means of $\pTM$;
so it can only be ${\tilde v}^\gamma$. Therefore, $\varpi({\tilde
v}^\gamma)=\xi$ and the horizontal part of $\tilde{v}^\gamma$ is
${\dot{\ov c}}^\gamma$.

Suppose now again that $\tilde v(t)$ is an integral curve of
${\ov\Gamma}_1$:
\begin{equation}\label{intcurveGamma1}
\dot{\tilde v} = {\ov\Gamma}_1\circ\tilde{v};
\end{equation}
and that $v(t) = \psiTM_{g(t)} {\tilde v}^\gamma(t)$ is an integral
curve of $\Gamma$. Notice that
\[
c=\tau\circ v=\tau\circ\psiTM_{g}\tilde{v}^{\gamma}
=\psiM_{g}(\tau\circ\tilde{v}^{\gamma})=\psiM_{g}\ov{c}^\gamma;
\]
that is to say, the curve in $G$ required to bring
$\tilde{v}^\gamma$ to $v$ in $TM$ is the same as the curve in $G$
required to bring $\ov{c}^\gamma$ to $c=\tau\circ v$ in $M$. In fact
from equation~(\ref{intcurveX}), since $\Gamma$ is a \sode,
\begin{equation}\label{vcdot}
v=\dot c= \psiTM_{g} \left((\theta(\dot{g}))_M\circ{\ov c}^\gamma +
\dot{{\ov c}}^\gamma\right).
\end{equation}
So, in the case that $\tilde v$ is an integral curve of
${\ov\Gamma}_1$, the curve $\xi(t)=\varpi({\tilde v}^\gamma(t)) \in
\la$ must equal $ \theta(\dot{g})$, which agrees with
equation~(\ref{omegaVeq}).

We turn finally to the integral curves $\tilde v$ of
${\ov\Gamma}_1$. We will use the connection $\ov{\omega'}$ to
decompose equation~(\ref{intcurveGamma1}) into two coupled equations
for the two curves $\ov\xi \in \ov\la$ and $\ov c\in M/G$ that
constitute $\tilde v$. The first equation is related to
${\ov\Gamma}_2\in \Sec(\varrho^*TT(M/G))$, which can be considered
as a map ${\ov\Gamma}_2: TM/G \to TT(M/G)$; thus
${\ov\Gamma}_2\circ\tilde{v}$ is a curve in $TT(M/G)$. The second
equation is related to $\ov{\omega'}({\ov\Gamma}_1)$, which is a
vertical vector field on $TM/G$.  This vector field can be regarded
as a section of $(\ov\tau)^*\og$, thus as a map $TM/G \to \og$; so
$\ov{\omega'}({\ov\Gamma}_1\circ\tilde{v})$ can be regarded as a
curve in $\og$.  The projection of this curve onto $M/G$ is
obviously ${\ov c}$.

We need to introduce one more concept: that of the associated linear
connection on the associated bundle $\ov\la \to M/G$ (see also
\cite{Cendra, Mac}). In fact, we will only need its covariant
derivative operator $\frac{D^A}{Dt}$ which acts on curves $\ov\xi$
in $\ov\la$. Let $\ov c(t)$ be the projection of $\ov\xi(t)$ on
$M/G$ and let $c(t)$ be any curve in $M$ that projects on $\ov
c(t)$. Let $\xi(t)\in \la$ be such that $\ov\xi = \equivcl{c,\xi}$.
Then, the covariant derivative of $\ov\xi$ can be defined as
\[
\frac{D^A \ov\xi}{Dt} = \equivcl{c, \dot{\xi}-[\varpi\circ {\dot c},
\xi]  }
\]
($\dot\xi$ stands here for the projection on the second argument of
this curve in $T\la=\la\times \la$). To see that this definition is
independent of the choice of the representative in the equivalence
class, take any other $d(t) \in M$ with $d(t) =\psiM_{h(t)}c(t)$.
The corresponding curve $\xi^d(t)$ in $\la$ such that
$\ov\xi=\equivcl{d, \xi^d}$ is then equal to $ad_{h^{-1}}\xi$.
Moreover, $\dot d = \psiTM_h \big(\dot c + (\theta(\dot h))(c)\big)$
and ${\dot\xi}_d = ad_{h^{-1}}\big(\dot\xi + [\theta(\dot h),
\xi]\big)$. So, indeed,
\begin{eqnarray*} \equivcl{d,
\dot{\xi^d}-[\varpi\circ{\dot d}, \xi^d]} & = & \equivcl{d,
ad_{h^{-1}}\big(\dot\xi + [\theta(\dot h), \xi]\big) -
[ad_{h^{-1}}\big(\varpi\circ\dot c + \theta(\dot h)\big),
ad_{h^{-1}}\xi] }
\\ & = &\equivcl{\psiM_{h}c, ad_{h^{-1}} \big(
\dot{\xi}-[\varpi\circ{\dot c}, \xi]\big) }= \equivcl{c,
\dot{\xi}-[\varpi\circ{\dot c}, \xi] }.
\end{eqnarray*}
Remark that in the particular case of the horizontal lift, $
\frac{D^A}{Dt}\equivcl{{\ov c}^\gamma, \xi} = \equivcl{{\ov
c}^\gamma, \dot{\xi}}$.

One can show that the explicit formula for the associated linear
connection is
\[
\nabla^A: \vectorfields{M/G} \times \Sec(\ov\la) \to \Sec(\ov\la):
(\ov X, \ov\xi)\mapsto \nabla^A_{\ov X} \ov\xi = [\ov\gamma(\ov
X),\ov\xi].
\]
Here $[\cdot,\cdot]$ stands for the above mentioned Lie algebroid
bracket of the Atiyah algebroid $TM/G$. For more details, see e.g.\
\cite{Mac}.

\begin{thm} Let $\ov\xi(t)\in
\ov\la$, $\ov c(t)\in M/G$ and put $\tilde v = \ov \xi + \ov\gamma
(\dot{\ov c})$.  If $\ov c$ and $\ov\xi$ are solutions of
\begin{equation} \label{decGamma1}
\left\{
\begin{array}{lll}
\ddot{\ov c} &=& {\ov\Gamma}_2 \circ{\tilde v},\\[1mm] \displaystyle
\frac{D^A{\ov\xi}}{Dt} &=& \ov{\omega'}({\ov\Gamma}_1\circ{\tilde
v}),
\end{array}
\right.
\end{equation}
then $\tilde v$ is an integral curve of ${\ov\Gamma}_1$. Solve
${\dot{\ov c}}^\gamma = \gamma({\ov c}^\gamma, \dot{\ov c})$ for
${\ov c}^\gamma(t) \in M$ and let $\xi(t)\in\la$ be such that
$\ov\xi = \equivcl{{\ov c}^\gamma,\xi}$. If $g(t)\in G$ is a
solution of
\begin{equation} \label{gxieq}
\theta(\dot{g})= \xi
\end{equation}
then the curve $v=\psiTM_g(\xi_M\circ\ov{c}^\gamma + {\dot{\ov
c}}^\gamma)= \psiTM_g {\tilde v}^\gamma$ is an integral curve of
$\Gamma$.

Conversely, suppose that $v$ is an integral curve of $\Gamma$. Let
$\tilde v = \pTM\circ v$, $\ov c= \pM\circ\tau\circ v$ and
$\ov\xi=\ov\omega \circ \tilde v$. Then ${\tilde v}$ is an integral
curve of ${\ov\Gamma}_1$ and $\ov c$ and $\ov \xi$ satisfy
(\ref{decGamma1}). Compute ${\ov c}^\gamma$ from ${\dot{\ov
c}}^\gamma = \gamma({\ov c}^\gamma, \dot{\ov c})$. Let $g\in G$ be
such that $c=\psiTM_g {\ov c}^\gamma$ and let $\xi\in\la$ be such
that $\ov\xi=\equivcl{{\ov c}^\gamma,\xi}$. Then $g$ satisfies
equation (\ref{gxieq}).
\end{thm}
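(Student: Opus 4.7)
The plan is to reduce the theorem to Proposition 1 (the first-order reconstruction) applied to the principal bundle $\pTM: TM \to TM/G$ equipped with the vertical lift connection of Proposition 2, and to use the third connection $\ov{\gamma'}$ to split the integral-curve equation $\dot{\tilde v} = \ov\Gamma_1 \circ \tilde v$ into the coupled pair~(\ref{decGamma1}).

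\textbf{Forward direction.} Given $(\ov c, \ov\xi)$ satisfying (\ref{decGamma1}), I would first show that $\tilde v = \ov\xi + \ov\gamma(\dot{\ov c})$ is an integral curve of $\ov\Gamma_1$. Since $\ov{\gamma'}$ splits the vertical exact sequence $0 \to \ov\tau^*\ov\la \to T(TM/G) \to \varrho^*TT(M/G) \to 0$ and $T\varrho\circ\ov\Gamma_1 = \ov\Gamma_2$ by the commutative $M/G$-diagram, the equation $\dot{\tilde v} = \ov\Gamma_1\circ\tilde v$ is equivalent to the conjunction of $T\varrho(\dot{\tilde v}) = \ov\Gamma_2\circ\tilde v$ and $\ov{\omega'}(\dot{\tilde v}) = \ov{\omega'}(\ov\Gamma_1\circ\tilde v)$. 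The first is immediate: since $\varrho(\ov\xi)=0$ and $\varrho\circ\ov\gamma = \id$, $\varrho(\tilde v) = \dot{\ov c}$, so $T\varrho(\dot{\tilde v}) = \ddot{\ov c}$, matching the first equation of~(\ref{decGamma1}). The second reduces, by the second equation of~(\ref{decGamma1}), to the kinematic identity
\[
\ov{\omega'}(\dot{\tilde v}) = \frac{D^A\ov\xi}{Dt},
\]
valid for any curve of the form $\tilde v = \ov\xi + \ov\gamma(\dot{\ov c})$. Verifying this identity is the main obstacle: my plan is to pick $\xi(t)\in\la$ with $\ov\xi = \equivcl{\ov c^\gamma, \xi}$, observe that the horizontal lift of $\tilde v$ through any chosen point above $\tilde v(0)$ is $\tilde v^\gamma = \xi_M\circ\ov c^\gamma + \dot{\ov c}^\gamma$ (the explicit formula obtained just before the theorem), differentiate and project by $T\pTM$ followed by $\ov{\omega'}$; on the other side, since $\varpi\circ\dot{\ov c}^\gamma = 0$, the formula for the associated covariant derivative gives $D^A\ov\xi/Dt = \equivcl{\ov c^\gamma, \dot\xi}$. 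Matching the two sides amounts to checking, in the mixed basis $\{\clift{X_i},\clift{\tilde E_a},\vlift{X_i},\vlift{\hat E_a}\}$ of $\vectorfields{TM}$ and the corresponding basis of $\vectorfields{TM/G}$, that the only surviving contributions after $\ov{\omega'}$ are the $\vlift{\hat{E}_a}$-coefficients, which are precisely the components $\dot\xi^a$.

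\textbf{Reconstruction and converse.} Once $\tilde v$ is known to be an integral curve of $\ov\Gamma_1$, the reconstruction of $v$ is a direct application of Proposition 1 to the principal bundle $\pTM$ with the vertical lift connection: $v = \psiTM_g \tilde v^\gamma$, where $g$ solves $\theta(\dot g) = \vlift{\varpi}(\Gamma\circ\tilde v^\gamma)$. Because $\Gamma$ is a \sode, $\vlift{\varpi}(\Gamma\circ\tilde v^\gamma) = \varpi(T\tau(\Gamma\circ\tilde v^\gamma)) = \varpi(\tilde v^\gamma) = \xi$, which is equation~(\ref{gxieq}); substituting the explicit form of $\tilde v^\gamma$ gives the stated formula for $v$. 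The converse is obtained by running the argument backwards: from an integral curve $v$ of $\Gamma$, the $\pTM$-relatedness of $\Gamma$ and $\ov\Gamma_1$ immediately yields that $\tilde v = \pTM\circ v$ is an integral curve of $\ov\Gamma_1$, and the same $\ov{\gamma'}/\ov{\omega'}$-decomposition then produces the two equations of~(\ref{decGamma1}) read in the opposite direction. Finally, applying Proposition 1 once more to the invariant vector field $\Gamma$ on $TM$ identifies the curve $g$ with $c = \psiM_g\ov c^\gamma$ as the unique solution of (\ref{gxieq}) with $g(0)=e$.
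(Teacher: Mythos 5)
Your proposal is correct and is essentially the paper's own argument: the reconstruction steps and the converse are precisely the paper's reduction to Proposition 1 applied to the principal bundle $\pTM:TM\to TM/G$ with the vertical lift connection (already carried out in the paragraphs preceding the theorem, including the explicit formula $\tilde v^\gamma=\xi_M\circ\ov c^\gamma+\dot{\ov c}{}^\gamma$ and the identification $\vlift{\varpi}(\Gamma\circ\tilde v^\gamma)=\varpi(\tilde v^\gamma)=\xi$), while the equivalence of $\dot{\tilde v}=\ov\Gamma_1\circ\tilde v$ with the coupled system rests on the same mixed-basis computation. The only difference is one of packaging: you isolate the crux as the single identity $\ov{\omega'}(\dot{\tilde v})=D^A\ov\xi/Dt$ and then verify it in the basis, whereas the paper verifies directly that the fibre equation $\dot w^a+\bar A^a_c\dot A^c_b w^b=G^a$ for integral curves of $\ov\Gamma_1$ coincides, via $\bar A^a_c\dot A^c_bw^b=\gamma_i^dC^a_{db}\dot x^iw^b$, with the covariant-derivative form of the second equation of the theorem.
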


If $d$ is a curve in $M$ such that $\pM(d) =\ov c$ and if we denote
by $\xi^d$ the curve in $\la$ which is such that
$\ov\xi=\equivcl{d,\xi}$, then the last equation of
(\ref{decGamma1}) could equivalently be written as $\dot\xi -
[\varpi(\dot d),\xi] = \omega'(\Gamma_1 \circ v^d)$, where $v^d =
\xi_M \circ d + \gamma(d,\dot{\ov c})$ is the unique curve on $TM$
that projects on both ${\tilde v}$ and $d$. Indeed, the relation
between $\omega'(\Gamma_1)$ and ${\ov\omega}'({\ov\Gamma}_1)$ is
 \[
{\ov\omega}'({\ov\Gamma}_1)(\pTM(v_m)) = \equivcl{m,
\omega'(\Gamma_1)(v_m)},
\]
and therefore, ${\ov\omega}'({\ov\Gamma}_1\circ {\tilde v}) =
\equivcl{d, \omega'(\Gamma_1 \circ {\tilde v}^d )}$.

If one is interested only in the coordinates on $M/G$ (`shape'
variables in \cite{Bloch}), it is necessary only to solve equations
(\ref{decGamma1}) where the symmetry has already been cancelled out.
If the whole motion on $M$ is required one will have to solve the
whole system.

The proof of theorem will follow from the considerations of the
coordinate version of the reduced second-order equations in the
following paragraphs.

We begin our description of the coordinate expression of the
equations with a general remark. If we take any local basis
$\{X_\alpha\}$ of vector fields on some manifold $M$, not
necessarily a coordinate basis, and express any tangent vector $v$
at $m\in M$ in terms of this basis so that $v=v^\alpha X_\alpha|_m$,
then the $v^\alpha$ will serve as fibre coordinates on $TM$.  A
vector field $\Gamma$ on $TM$ will be a \sode\ if and only if it
takes the form
$\Gamma=v^\alpha\clift{X_\alpha}+F^\alpha\vlift{X}_\alpha$, and its
integral curves will satisfy $\dot{v}^\alpha=F^\alpha$.  In terms of
a new basis $\{Y_\alpha\}$, where $Y_\alpha=A_\alpha^\beta X_\beta$,
we have $v=v^\alpha X_\alpha|_m=w^\alpha Y_\alpha|_m$ where $w^\beta
A^\alpha_\beta=v^\alpha$.  Moreover,
$\Gamma=w^\alpha\clift{Y_\alpha}+G^\alpha\vlift{Y}_\alpha$ where
$F^\alpha=A^\alpha_\beta G^\beta+\dot{A}^\alpha_\beta w^\beta$, the
overdot here indicating the total derivative.

We turn now to the case of interest. We have defined on $M$ two local vector
field bases $\{X_i,\tilde{E}_a\}$  and $\{X_i,\hat{E}_a\}$, with
\[
X_i=\gamma\left(\vf{x^i}\right)
\]
where the $x^i$ are local coordinates on $M/G$, and $\{E_a\}$ is a
basis for $\la$. Both $\{\tilde{E}_a\}$ and $\{\hat{E}_a\}$ are
bases of vector fields which are vertical with respect to the
projection $\pM:M\to M/G$. The first, which consists of fundamental
vector fields, we called the moving basis. The body-fixed local
basis $\{\hat{E}_a\}$, on the other hand, consists of $G$-invariant
vertical vector fields. We have $\hat{E}_a=A_a^b\tilde{E}_b$ where
the coefficients $A_a^b$ satisfy
$\tilde{E}_a(A_b^c)+C^c_{ad}A_b^d=0$ (equation~(\ref{eqnforA})).

For any $v\in T_mM$ we set
\[
v=v^iX_i|_m+v^a\tilde{E}_a|_m=v^iX_i|_m+w^a\hat{E}_a|_m;\quad
v^a=A^a_bw^b.
\]
The $v^i$ may be regarded as the fibre coordinates on $T(M/G)$
corresponding to the base coordinates $x^i$. We show in the following
paragraph that the $v^a$ satisfy
\begin{equation}\label{vacoords}
\clift{\tilde{E}_b}(v^a)+C^a_{bc}v^c=0;
\end{equation}
so from equation~(\ref{secassoc}) we may consider $v\mapsto v^aE_a$ as
defining a section of $\ov{\tau}^*\ov{\la}\to TM/G$; this is just the
section $\ov{\vomega}(\ov{\Gamma})$. On the other hand, since
$v^a=A^a_bw^b$, where the $A^a_b$ are functions on $M$, we have
\[
A^a_c\clift{\tilde{E}_b}(w^c)+\tilde{E}_b(A^a_c)w^c+C^a_{bc}v^c=0;
\]
but from equation (\ref{eqnforA})
\[
\tilde{E}_b(A^a_c)w^c=-C_{bd}^aA^d_cw^c=-C^a_{bc}v^c,
\]
whence $\clift{\tilde{E}_b}(w^a)=0$, as one might have expected.

Equation~(\ref{vacoords}) is a consequence of the following general
considerations. Let $\{Z_\alpha\}$ be any local basis of vector
fields on a manifold $M$, with dual basis of 1-forms
$\theta^\alpha$.  Let $\hat{\theta}$ be the fibre-linear function on
$TM$ defined by a 1-form $\theta$ on $M$, so that
$\hat{\theta}(x,v)=\theta_x(v)$.  Then if
$v^\alpha=\hat{\theta}^\alpha(x,v)$, $v=v^\alpha Z_\alpha|_x$.  For
any vector field $Z$ on $M$,
$\clift{Z}(\hat{\theta})=\widehat{\lie{Z}\theta}$.  But
$\lie{Z^\alpha}{\theta^\beta}(Z_\gamma)=-\theta^\beta([Z_\alpha,Z_\gamma])$,
so
$\lie{Z^\alpha}{\theta^\beta}=-C^\beta_{\alpha\gamma}\theta^\gamma$
where $[Z_\alpha,Z_\gamma]=C^\beta_{\alpha\gamma}Z_\beta$.  That is
to say, $\clift{Z}_\alpha(v^\beta)=-C^\beta_{\alpha\gamma}v^\gamma$.
It follows that $[\clift{Z}_\alpha,v^\beta\clift{Z}_\beta]=0$.

The second-order differential equation field $\Gamma$ may be
written
\begin{eqnarray*}
\Gamma&=& {v}^i \clift{X_i} + {v}^a \clift{\tilde{E}_a}
+ D^i \vlift{X_i} + D^a \vlift{\tilde{E}_a}\\
&=& {v}^i \clift{X_i} + {w}^a \clift{\hat{E}_a}
+ D^i \vlift{X_i} + F^a \vlift{\hat{E}_a};
\end{eqnarray*}
we have
\[
D^a=A^a_bF^b+\dot{A}^a_bw^b.
\]
By assumption $\Gamma$ is $G$-invariant, which is to say that
$[\clift{\tilde{E}_a},\Gamma]=0$.  Now
$[\clift{\tilde{E}_a},v^i\clift{X_i}]=0$, and
$[\clift{\tilde{E}_a},v^b\clift{\tilde{E}_b}]=0$, as follows from
equation~(\ref{vacoords}) and the argument that establishes it.
Moreover $[\clift{\tilde{E}_a},w^b\clift{\hat{E}_b}]=0$ since
$\clift{\tilde{E}_a}(w^b)=0$ and
$[\clift{\tilde{E}_a},\clift{\hat{E}_b}]=0$.  Thus ${v}^a
\clift{\tilde{E}_a}$ and ${w}^a \clift{\hat{E}_a}$ are both
$G$-invariant; they are not however equal, but differ by the vertical
vector field $w^b\dot{A}^a_b\vlift{\tilde{E}_a}$, which accordingly is
$G$-invariant.  Next, $[\clift{\tilde{E}_a},D^i\vlift{X_i}]=
\clift{\tilde{E}_a}(D^i)\vlift{X_i}$ since
$[\clift{\tilde{E}_a},\vlift{X_i}]=\vlift{[\tilde{E}_a,X_i]}=0$.  On
the other hand,
\begin{eqnarray*}
[\clift{\tilde{E}_a},D^b\vlift{\tilde{E}_b}]&=&
\clift{\tilde{E}_a}(D^b)\vlift{\tilde{E}_b}+
D^b[\clift{\tilde{E}_a},\vlift{\tilde{E}_b}]\\
&=&\clift{\tilde{E}_a}(D^b)\vlift{\tilde{E}_b}+
D^b\vlift{[\tilde{E}_a,\tilde{E}_b]}\\
&=&\left(\clift{\tilde{E}_a}(D^b)+
D^cC^b_{ac}\right)\vlift{\tilde{E}_b}.
\end{eqnarray*}
Finally,
$[\clift{\tilde{E}_a},F^b\vlift{\hat{E}_b}]
=\clift{\tilde{E}_a}(F^b)\vlift{\hat{E}_b}$.
The remaining coefficients of $\Gamma$ must therefore satisfy
\[
\clift{\tilde{E}_a}(D^i)=0,\quad
\clift{\tilde{E}_a}(D^b)+ D^cC^b_{ac}=0,\quad
\clift{\tilde{E}_a}(F^b)=0;
\]
that is to say, $D^i$ and $F^b$ are $G$-invariant, while the $D^a$ may be
regarded as the components of a section of $\ov{\tau}^*\ov{\la}\to
TM/G$.

Observe that
\[
\vomega(\Gamma)=v^a\clift{\tilde{E}_a}\in\vectorfields{TM}.
\]
The corresponding $\la$-valued function
$\vlift{\varpi}(\Gamma)$ is given by $\vlift{\varpi}(\Gamma)= v^aE_a$;
it is independent of the choice of $\Gamma$, as we remarked before,
and as we showed above it in fact determines a section of
$\ov{\tau}^*\ov{\la}$.

We may also express $\Gamma$ in terms of the mixed
basis:
\[
\Gamma={v}^i \clift{X_i} + {v}^a \clift{\tilde{E}_a} + D^i
\vlift{X_i} + G^a \vlift{\hat{E}_a},
\]
where
\[G^a=\bar{A}^a_bD^b=F^a+\bar{A}^a_c\dot{A}^c_bw^b,
\]
the $\bar{A}^a_b$ being the components of the inverse of the matrix
$(A^a_b)$. It is easy to see that $\clift{\tilde{E}_a}(G^b)=0$. We have
\[
\Gamma_1= {v}^i \ov{\clift{X_i}}
+ D^i \ov{\vlift{X_i}} + G^a \ov{\vlift{E_a}}.
\]
Then $\omega'(\Gamma_1)=G^a\ov{\vlift{E_a}}\in\Sec((\pTM)^*T(TM/G))$, and so
\[
\vgamma(\omega'(\Gamma_1))=G^a\vlift{\hat{E}_a}=
D^a\vlift{\tilde{E}_a} \in\vectorfields{TM}.
\]
The three-way decomposition of $\Gamma$ at the level of the $M$ square
diagram is therefore given by
\begin{eqnarray*}
\Gamma&=&\cgamma(\Gamma_2)+\vgamma(\omega'(\Gamma_1))+\vomega(\Gamma)\\
&=&({v}^i \clift{X_i}+D^i \vlift{X_i})+D^a \vlift{\tilde{E}_a}
+{v}^a \clift{\tilde{E}_a}.
\end{eqnarray*}
Among the equations for the integral curves of $\Gamma$ we find
\[
\left\{
\begin{array}{rcl}
\ddot{x}^i &=& D^i,\\
\dot{w}^a &=&F^a.
\end{array}
\right.
\]
We can write the latter as
\[
\dot{w}^a+\bar{A}^a_c\dot{A}^c_bw^b=G^a.
\]
Since the $w^a$ are $G$-invariant they can be taken, together with
$x^i$ and $v^i$, as coordinates on $TM/G$. The integral curves of
$\Gamma_1$ are the solutions of the equations
\[
\left\{
\begin{array}{rcl}
\ddot{x}^i &=& D^i,\\
\dot{w}^a+\bar{A}^a_c\dot{A}^c_bw^b&=&G^a.
\end{array}
\right.
\]
This latter equation  has a familiar structure:\ one could think of
the term $\bar{A}^a_c\dot{A}^c_b$ as representing the `angular
velocity' of the body-fixed frame with respect to the moving frame,
and $w^a$ as components of some velocity with respect to the
body-fixed frame; the whole term $\bar{A}^a_c\dot{A}^c_bw^b$ is then
of Coriolis type.

We can also write the same equation as
\[
A^a_b\dot{w}^b+\dot{A}^a_bw^b=A^a_bG^b,
\]
which is equivalent to $\dot{v}^a=D^a$.

Finally, if we rewrite ${\dot A}^c_b$ as ${\dot x}^j X_j (A^c_b) +
v^d {\tilde E}_d(A^c_b)$ and use the formulae
$X_j(A_b^c)=\gamma_j^dC^e_{db}A^c_e$ obtained earlier
(equation~(\ref{XofA})) and $\tilde{E}_d(A_b^c)=-C^c_{de}A_b^e$
(equation~(\ref{eqnforA})) we find that
\begin{eqnarray*}
{\bar A}^a_c{\dot A}^c_b w^b&=&
\bar{A}^a_c\left(\dot{x}^j\gamma_j^dC^e_{db}A^c_e-v^dC^c_{de}A_b^e\right)w^b\\
&=&\dot{x}^j\gamma_j^dC^a_{db}w^b-\bar{A}^a_cv^dv^eC^c_{de}
=\dot{x}^j\gamma_j^dC^a_{db}w^b.
\end{eqnarray*}
But $\gamma^d_jC^a_{db}=\adjconn ajb$ are the connection coefficients
of the adjoint connection; the equation for $w^a$ is therefore
equivalent to
\[
\dot{w}^a+ \adjconn aib {\dot x}^i w^b=G^a,
\]
which is in agreement with the second of equations~(\ref{decGamma1})
in the theorem.

\section{An example}

In this final section we determine the reduced
equations for an interesting class of \sode s.

The case to be discussed is that in which there is a `kinetic energy' metric
$k$ on $M$, with Levi-Civita covariant derivative $\nabla$, and the
equations of motion of the original dynamical system take the form
\[
\nabla_{\dot{c}}\dot{c}=F(c,\dot{c})
\]
for the curve $t\mapsto c(t)$ on $M$.  Such a system may be called a
system of mechanical type, with $F$ representing a force field.  We
hasten to point out, however, that according to the philosophy of
the paper as we described it in the Introduction these features of
the system are incidental to our main purpose, which is to
illustrate the methods described above using a familiar example,
rather than to discover properties of systems of mechanical type
that they have because they are systems of mechanical type.

There is great potential for confusion here, since we will now have
two connections of fundamental importance to deal with, the
Levi-Civita connection and the connection on the principal bundle
$\pM:M\to M/G$ (when we have defined the group $G$ and its action);
we warn the reader to be on guard.

We may write $\Gamma$ in the form $\Gamma=\Gamma_0+\Phi$ where
$\Gamma_0$ is the geodesic spray of the Levi-Civita connection and
$\Phi$ is the force term on the right-hand side of the equations of
motion considered as a vertical vector field on $TM$.  We now examine
the possible symmetry conditions.  For any vector field $Z$ on $M$ and
any affine spray $\Gamma_0$, $[\clift{Z},\Gamma_0]$ is vertical and
quadratic in the fibre coordinates.  On the other hand,
$[\clift{Z},\Phi]$ is vertical since $\Phi$ is; but in cases of
interest (for example, when $F$ is independent of velocities, or
linear in them, or a combination of the two) there will be no terms
quadratic in the fibre coordinates; so it is natural to consider the
situation where $[\clift{Z},\Gamma_0]$ and $[\clift{Z},\Phi]$ vanish
separately.  Now $[\clift{Z},\Gamma_0]$ vanishes if and only if $Z$ is
an infinitesimal affine transformation of the symmetric covariant
derivative defined by $\Gamma_0$, which in the case under discussion
is the Levi-Civita connection of $k$.  Since any infinitesimal
isometry is affine, it is natural to assume further that $G$ is a
group of isometries of $k$, whose elements in addition leave invariant
the force term, as represented by the vertical vector field $\Phi$.
Such a group is always a symmetry group of $\Gamma$, and in many cases the
maximal symmetry group will be of this form.

We now turn to the choice of a vector field basis on $M$ adapted to
the group action.  In this case there is a natural choice for the
connection on $\pM$:\ take its horizontal subspaces to be the
orthogonal complements of the tangent planes to the group orbits;
they are $G$-invariant since the group consists of isometries.  The
vertical vector fields $\tilde{E}_a$ comprise a basis for the
Killing fields or infinitesimal isometries.  We shall however work
with an invariant, body-fixed basis for the vertical vector fields;
that is, we choose a local basis of vector fields of the form
$\{X_i,\hat{E}_a\}$.  The components of $k$ in this basis are
denoted by $k_{ab}$, $k_{ai}$, $k_{ij}$ in the obvious fashion. The
$k_{ab}$ are evidently $G$-invariant.  By construction, $k_{ai}=0$.
The $k_{ij}$ are also $G$-invariant, and so define functions
$\ov{k}_{ij}$ on $M/G$ which are the components of the reduced
metric, say $\ov{k}$, with respect to the local vector field basis
there.  We may without loss of generality take this basis to consist
of coordinate fields, as before; the $X_i$ will not in general
commute, but $[X_i,X_j]$ will have components tangent to the group
orbits; we set $[X_i,X_j]=K_{ij}^a\hat{E}_a$ (this in effect defines
$K$ as the curvature of the connection).

The connection on $\pM$ has now been entirely taken care of;
references to a connection henceforth always mean the Levi-Civita
connection.

We set $\Phi=\Phi^i\vlift{X_i}+\Phi^a\vlift{\hat{E}_a}$; by assumption
both $\Phi^i$ and $\Phi^a$ are $G$-invariant.

In order to find the reduced system it is necessary to express
$\Gamma$ in terms of the adapted basis.  For this purpose we need the
Christoffel symbols of the Levi-Civita connection with respect to the
basis $\{X_i,\hat{E}_a\}$:\ we set
\[
\nabla_{\hat{E}_a}\hat{E}_b=\conn cab\hat{E}_c+\conn iab X_i
\]
and so on.  The order of indices is important; though the
Levi-Civita connection is symmetric, it is represented here with
respect to a non-coordinate frame. To calculate the Christoffel
symbols we need the brackets of the basis vector fields.  Recall
from equation~(\ref{hatEXbrac}) that
$[\hat{E}_a,X_i]=\gamma_i^bC^c_{ab}\hat{E}_c$, and that the
connection coefficients of the adjoint connection are given by
$\adjconn bia=\gamma_i^cC^b_{ca}$.  We therefore have the following
bracket relations:
\[
[\hat{E}_a,\hat{E}_b]=-C^c_{ab}\hat{E}_c;\quad
[X_i,\hat{E}_a]=\adjconn bia\hat{E}_b;\quad
[X_i,X_j]=K^a_{ij}\hat{E}_a.
\]
Since all of the vector fields appearing are $G$-invariant, so are
all of the coefficients on the right-hand sides. Furthermore, since
all of the brackets are vertical the Christoffel symbols with upper
index $i$ will be symmetric in their lower indices.

Using these data in the standard Koszul formulae for the Levi-Civita connection
coefficients of $k$ with respect to the basis $\{\hat{E}_a,X_i\}$ we
find that
\begin{eqnarray*}
\conn abc&=&\onehalf\left(-C^a_{bc}
+k^{ad}(k_{be}C^e_{dc}+k_{ce}C^e_{bd})\right)\\
\conn ibc&=&\onehalf k^{ij}\left(-X_j(k_{bc})
+k_{bd}\adjconn djc+k_{cd}\adjconn djb\right)\\
\conn ajb&=&\onehalf k^{ac}\left(X_j(k_{bc})-k_{bd}\adjconn djc
+k_{cd}\adjconn djb\right)\\
\conn abj&=&\onehalf k^{ac}\left(X_j(k_{bc})-k_{bd}\adjconn djc
-k_{cd}\adjconn djb\right)\\
\conn ijb&=&-\onehalf k^{ik}k_{bc}K^c_{jk}=\conn ibj\\
\conn ajk&=&\onehalf K^a_{jk}\\
\conn ijk&=&\barconn ijk,
\end{eqnarray*}
where in the final line the $\barconn ijk$ are the Christoffel
symbols of the reduced metric $\ov{k}_{ij}$.

It follows that
\begin{eqnarray*}
\Gamma_0&=&\dot{x}^i\clift{X}_i+w^a\clift{\hat{E}_a}\\
&&\mbox{}-
\left(\dot{x}^j\dot{x}^k\conn ijk+\dot{x}^jw^b(\conn ijb+\conn ibj)
+w^bw^c\conn ibc\right)\vlift{X_i}\\
&&\mbox{}-
\left(\dot{x}^j\dot{x}^k\conn ajk+\dot{x}^jw^b(\conn ajb+\conn abj)
+w^bw^c\conn abc\right)\vlift{\hat{E}_a}\\
&=&\dot{x}^i\clift{X}_i+w^a\clift{\hat{E}_a}\\
&&\mbox{}-
\left(\dot{x}^j\dot{x}^k\barconn ijk-\dot{x}^jw^b k^{ik}k_{bc}K^c_{jk}
+w^bw^ck^{ij}\left(-\onehalf X_j(k_{bc})+k_{bd}\adjconn djc\right)
\right)\vlift{X_i}\\
&&\mbox{}-
\left(\dot{x}^jw^b k^{ac}\left(X_j(k_{bc})-k_{bd}\adjconn djc\right)
+w^bw^ck^{ad}k_{be}C^e_{dc}\right)\vlift{\hat{E}_a}.
\end{eqnarray*}
The reduced equations are therefore
\begin{eqnarray*}
\ddot{x}^i+\barconn ijk\dot{x}^j\dot{x}^k&=&\Phi^i+
\dot{x}^jw^b k^{ik}k_{bc}K^c_{jk}
+w^bw^ck^{ij}\left(\onehalf X_j(k_{bc})-k_{bd}\adjconn djc\right)\\
\dot{w}^a+\adjconn ajb\dot{x}^jw^b&=&\Phi^a
-\dot{x}^jw^b k^{ac}\left(X_j(k_{bc})-k_{bd}\adjconn djc
-k_{cd}\adjconn djb\right)-w^bw^ck^{ad}k_{be}C^e_{dc}.
\end{eqnarray*}
The first equation can be written
\[
\frac{\overline{D}(\ov{k}_{ij}\dot{x}^j)}{Dt}=
\Phi_i-\dot{x}^jw^bk_{bc}K^c_{ij}
+\onehalf w^bw^c\nabla^A_{\partial/\partial x^i}(k_{bc}),
\]
where $\overline{D}/Dt$ is the covariant derivative operator of the
Levi-Civita connection of $\ov{k}$, and $\Phi_i=\ov{k}_{ij}\Phi^j$.
We can write the equation for $w^a$ in either of the following two
forms:
\begin{eqnarray*}
\frac{D^Aw^a}{Dt}&=&\Phi^a
-w^b k^{ac}\frac{D^Ak_{bc}}{Dt}-w^bw^ck^{ad}k_{be}C^e_{dc}\\
\frac{D^Aw_a}{Dt}&=&\Phi_a-w_bw_ck^{cd}C^b_{ad};
\end{eqnarray*}
to obtain the second we have used $k_{ab}$ to lower indices.

When $F=0$, that is, when $\Phi^i=\Phi^a=0$, we obtain Wong's
equations \cite{Cendra,Mont}. The case in which $F\neq0$ but $G$ is
1-dimensional is discussed by Bullo and Lewis \cite{Bullo}. We shall
show that in both cases our equations subsume those of the cited
authors.

In the case discussed in \cite{Cendra,Mont}, in addition to $F=0$ it
is assumed that the vertical part of the metric comes from a
bi-invariant metric on the Lie group $G$.  This means in the first
place that $\lie{\hat{E}_c}k(\hat{E}_a,\hat{E}_b)=0$ as well as
$\lie{\tilde{E}_c}k(\hat{E}_a,\hat{E}_b)=0$, and secondly that the
$k_{ab}$ must be independent of the $x^i$.  From the first condition
we easily find that the $k_{ab}$ must satisfy
$k_{ad}C^d_{bc}+k_{bd}C^d_{ac}=0$, and therefore $k_{ac}\adjconn
cib+k_{bc}\adjconn cia=0$.  From both together we see that the
$k_{ab}$ must be constants.  Thus $\nabla^A_{\partial/\partial
x^i}(k_{bc})=0$ and $w^bw^ck_{be}C^e_{dc}=-w^bw^ck_{de}C^e_{bc}=0$,
and the reduced equations are
\begin{eqnarray*}
\frac{\overline{D}(\ov{k}_{ij}\dot{x}^j)}{Dt}&=&-\dot{x}^jw_bK^b_{ij}\\
\frac{D^Aw_a}{Dt}&=&0.
\end{eqnarray*}
These are equivalent to the equations given in \cite{Cendra,Mont}.

In the 1-dimensional case we have a single Killing field $\tilde{E}$;
this vector field is also clearly invariant, so we shall simplify the
notation by denoting it simply by $E$ (Bullo and Lewis in fact write
$X$ for this vector field).  There is but one component of $k_{ab}$,
which is $k(E,E)=|E|^2$, and $k$ with notional upper indices is just
$|E|^{-2}$.  Furthermore, $|E|^2$ is itself invariant, and may
therefore be considered as a function on $M/G$.  An arbitrary tangent
vector $V$ may be written in the form $V=vE+v^iX_i$ (so $v$ is to be
identified with the single component of $w^a$), and since the $X_i$
are orthogonal to $E$ we have
\[
v=\frac{k(V,E)}{|E|^2};
\]
Bullo and Lewis call the map $V\mapsto k(V,E)$ the momentum map and
denote it by $J_X$.  They also introduce a type $(1,1)$ tensor field
on $M/G$ which they call the gyroscopic tensor, which they denote by
$C_X$.  The gyroscopic tensor is given essentially as follows.  The
covariant differential $\nabla E$ is a type $(1,1)$ tensor field on
$M$.  Let us denote by $E^\perp$ the distribution orthogonal to $E$,
that is, the distribution spanned by the vector fields $X_i$.  Then
$\nabla E$ may be used to define an operator on $E^{\perp}$, by first
restricting its arguments to lie in this distribution, and then
perpendicularly projecting its values into it.  Now in general we have
\[
\nabla_{X_i}\hat{E}_a=\conn jia X_j+\conn bia\hat{E}_b;
\]
so we are concerned here with $\conn jia=-\onehalf
k_{ab}k^{jk}K^b_{ik}$, albeit in the 1-dimensional case. In fact if
we write $[X_i,X_j]=K_{ij}E$ the gyroscopic tensor in
component form is
\[
C^j_i=|E|^2k^{jk}K_{ik}=|E|^2\ov{k}^{jk}K_{ik}.
\]
It is clear from this that $C^j_i$ is invariant and that
$C_{ij}=\ov{k}_{ik}C^k_j$ is skew-symmetric.  Moreover in the 1-dimensional
case $\adjconn aib=0$.  The reduced equations of motion in this case
are therefore
\begin{eqnarray*}
\ddot{x}^i+\barconn ijk\dot{x}^j\dot{x}^k&=&\Phi^i
+vC^i_j\dot{x}^j
+\onehalf v^2\ov{k}^{ij}\fpd{|E|^2}{x^j}\\
\dot{v}&=&\Phi^0
-\frac{v}{|E|^2}\dot{x}^j\fpd{|E|^2}{x^j}.
\end{eqnarray*}
Here $\Phi^0$ is the $E$-component of the force.  These equations
agree with those given by Bullo and Lewis.  These authors deal mainly
with the case in which the force is derived from a potential, and the
last term on the right-hand side of the first equation is subsumed by
them into the so-called effective potential.  Bullo and Lewis actually
give two versions of the reduced equations:\ one is in terms of $v$,
and is derived above; the other is in terms of $\mu=|E|^2v$, and the
second of the reduced equations is then simply $\dot{\mu}=0$.  But
since $|E|^2$ is the single component of $k_{ab}$, $\mu$ is just the
single component of $w_a$.  Furthermore Bullo and Lewis have
$\Phi_a=0$. So the equation $\dot{\mu}=0$ is just the second reduced
equation written in terms of $w_a$.

There is a simple explicit example which nicely illustrates both of these
cases, namely the Kaluza-Klein formulation of the equations of motion
of a charged particle in a magnetic field. The Hamiltonian and
Lagrangian approaches to this topic are well-known:\ see for
example~\cite{MandR}; here we derive the equations from those given
above. For $M$ we take $\mathrm{E}^3\times\mathrm{S}$, with
coordinates $(x^i,\theta)$. Let $A_i$ be the components of a covector
field on $\mathrm{E}^3$, and define a metric $k$ on $M$, the Kaluza-Klein
metric, by
\[
k=\delta_{ij}dx^i\odot dx^j+(A_idx^i+d\theta)^2
\]
where $(\delta_{ij})$ is the Euclidean metric. The Kaluza-Klein metric
admits the Killing field $E=\partial/\partial\theta$.  The vector
fields $X_i=\partial/\partial x^i-A_i\partial/\partial\theta$ are
orthogonal to $E$ and invariant; moreover
$k_{ij}=k(X_i,X_j)=\delta_{ij}$, while $|E|=1$. Finally
\[
[X_i,X_j]=\left(\fpd{A_i}{x^j}-\fpd{A_j}{x^i}\right)\vf{\theta}.
\]
Putting these values into the reduced equations above we obtain
\[
\ddot{x}^i=v\dot{x}^j\left(\fpd{A_i}{x^j}-\fpd{A_j}{x^i}\right),
\qquad \dot{v}=0.
\]
These are the equations of motion of a particle of unit mass and
charge $v$ in a magnetic field whose vector potential is $A_i dx^i$.

\subsubsection*{Acknowledgements}
The first author is a Guest Professor at Ghent University:\ he is
grateful to the Department of Mathematical Physics and Astronomy at
Ghent for its hospitality.

The second author is currently at The University of Michigan through
a Marie Curie Fellowship within the 6th European Community Framework
Programme. He is grateful to the Department of Mathematics for its
hospitality. He also acknowledges a research grant (Krediet aan
Navorsers) from the Research Foundation - Flanders, where he is an
Honorary Postdoctoral Fellow.


\end{document}